\newcommand{\g}{\frak{g}}
\newcommand{\h}{\frak{h}}
\newcommand{\A}{\mathcal{A}}
\newcommand{\F}{\mathcal{F}}
\newcommand{\LM}{\mathcal{L}\mathcal{M}}
\renewcommand{\P}{\mathcal{P}}
\newcommand{\Hom}{\mathrm{Hom}}
\newcommand{\lbar}{\overline}
\newcommand{\Ass}{\mathcal{A}ss}
\newcommand{\Com}{\mathcal{C}om}
\newcommand{\Dias}{\mathcal{D}ias}
\newcommand{\Lie}{\mathcal{L}ie}
\newcommand{\Lod}{\mathcal{L}od}
\newcommand{\Perm}{\mathcal{P}erm}
\newcommand{\Poiss}{\mathcal{P}oiss}
\newcommand{\Poly}{\mathcal{A}^{poly}}
\newcommand{\p}{\prime}
\newcommand{\pa}{\partial}
\newcommand{\ot}{\otimes}
\newcommand{\vd}{\vdash}
\newcommand{\dv}{\dashv}
\newtheorem{definition}{Definition}[section]
\newtheorem{lemma}[definition]{Lemma}
\newtheorem{proposition}[definition]{Proposition}
\newtheorem{theorem}[definition]{Theorem}
\newtheorem{corollary}[definition]{Corollary}
\newtheorem{remark}[definition]{Remark}
\newtheorem{example}[definition]{Example}
\date{}
\begin{document}

\title{Noncommutative Poisson brackets on Loday algebras
and related deformation quantization}
\author{K. UCHINO\footnote{Freelance; Tokyo Japan.}}
\maketitle
\abstract
{
Given a Lie algebra, there uniquely exists a Poisson
algebra which is called a Lie-Poisson algebra
over the Lie algebra.
We will prove that given a Loday/Leibniz algebra
there exists uniquely a noncommutative Poisson algebra
over the Loday algebra.
The noncommutative Poisson algebras are called
the Loday-Poisson algebras.
In the super/graded cases,
the Loday-Poisson bracket
is regarded as a noncommutative version
of classical (linear) Schouten-Nijenhuis bracket.
It will be shown that the Loday-Poisson algebras
form a special subclass of Aguiar's dual-prePoisson algebras.
We also study a problem of deformation quantization
over the Loday-Poisson algebra.
It will be shown that
the polynomial Loday-Poisson algebra
is deformation quantizable and
that the associated quantum algebra
is Loday's associative dialgebra.
\medskip\\
\noindent
MSC(2000):18D50, 53D17.\\
Keywords: Poisson algebras, Loday algebras,
Derived bracket construction,
Operads, Quantization and Higher category.
}
\section{Introduction}

A Loday algebra (also called Leibniz algebra \cite{Lod1,Lod2})
is an algebra equipped with a binary bracket
satisfying the Leibniz identity,
$$
[x_{1},[x_{2},x_{3}]]=[[x_{1},x_{2}],x_{3}]+[x_{2},[x_{1},x_{3}]].
$$
This new type algebra was introduced by J-L. Loday.
Hence it is called a Loday algebra.
A Lie algebra is clearly a Loday algebra whose bracket
is anti-commutative.
However the Loday bracket is not anti-commutative in general.\\
\indent
The Loday algebras arise
in several area of mathematics,
for example, as Loday algebroids
in Nambu-mechanics (\cite{ILMP});
as Courant algebroids in Poisson geometry;
in representation theory of Lie algebras
and so on.
\medskip\\
\indent
Because Loday algebras are considered to be
noncommutative analogues of Lie algebras,
it is natural to ask what
type of algebra is noncommutative analogue
of Lie-Poisson algebra.
The aim of this note is to solve this question.
\medskip\\
\indent
In first, we recall classical Lie-Poisson algebras.
If $\g$ is a Lie algebra, then
the polynomial functions $S(\g)$ becomes a Poisson algebra,
that is, the Lie-Poisson algebra.
The Lie-Poisson bracket
is characterized by the following 4-conditions;
(C1)
The Lie-Poisson algebras are Lie- and Commutative-algebras.
(C2)
The Lie-Poisson bracket is a biderivation
with respect to the associative multiplication on $S(\g)$.
(C3)
The base algebra $\g$ is a Lie subalgebra of $S(\g)$.
(C4)
The Lie-Poisson bracket on $S(\g)$ is
{\em free} over the Lie bracket on $\g$.\\
\indent
Let us consider noncommutative analogues of the conditions (C1)-(C4).
We assume that $\g$ is a Loday algebra
and that $\A$ is a certain algebra generated from $\g$.
The noncommutative analogues of (C1)-(C4) are respectively
as follows:
\begin{description}
\item[(A1)] The algebra $\A$ is a Loday- and associative-algebra,
not necessarily commutative.
\item[(A2)] The Loday bracket on $\A$ is a biderivation
with respect to the associative multiplication.
\item[(A3)] The base algebra $\g$ is a Loday subalgebra of $\A$.
\item[(A4)] The Loday bracket on $\A$ is {\em free}
over the bracket on $\g$.
\end{description}
We notice that
Aguiar's dual-prePoisson algebras (\cite{A}) satisfy (A1)-(A3).
We will prove that given a Loday algebra
there exists a dual-prePoisson algebra
satisfying also (A4) (\textbf{Theorem} \ref{theorema} below).
We call this special dual-prePoisson algebra
a \textbf{Loday-Poisson algebra},
which is considered to be a noncommutative
version of Lie-Poisson algebra.
As an application, we will study {\em super} Loday-Poisson brackets,
which are considered as noncommutative analogues
of classical (linear) Schouten-Nijenhuis brackets
(see Section 4.2 below).\\
\indent
To understand the dual-prePoisson algebras
we will study a category of linear mappings $\LM$ in \cite{LP}.
It is known that Loday algebras are {\em Lie gebras} in $\LM$
(so-called Lie objects).
We will introduce the notion of {\em Poisson gebra} in $\LM$,
which are called Poisson objects (see Section 3 below).
It will be shown that the Loday-Poisson algebra is
the {\em free} Poisson gebra defined over a Lie gebra.\\
\indent
We also study an algebraic quantization
of polynomial Loday-Poisson algebras.
It will be shown that
the polynomial Loday-Poisson algebras
are deformation quantizable in the sense of Kontsevich \cite{Kont}
and that the quantum algebras
are \textbf{associative dialgebras} in \cite{Lod2}.
Namely, given a Loday-Poisson algebra of polynomials,
there exists a unique associative dialgebra parameterized by $\hbar$
whose classical limit is the Loday-Poisson algebra.
It is a difficult problem whether Nambu-Poisson brackets are quantizationable.
Since the Nambu-Poisson brackets are equivalently Loday brackets,
our results provide a hope that Nambu-Poisson brackets are quantizationable.
\medskip\\
\indent
The paper is organized as follows.
In Section 2, we recall three noncommutative algebras,
i.e., Loday algebras,
perm-algebras of Chapoton \cite{Chap,Chap2}
and dual-prePoisson algebras.
In Section 3, we study a category of linear mappings.
In Section 4, we give the main theorem of this note,
namely, the problem above is solved.
In Section 5, we discuss the deformation quantization
for the Loday-Poisson algebras.
\medskip\\
\noindent
\textbf{Acknowledgement}.
The author would like to thank very
much referees for useful comments
and also thank Professor Alan Weinstein
for kind support.
\section{New type algebras}
\subsection{Loday/Leibniz algebras (\cite{Lod1, Lod2})}

A Loday algebra $(\g,[,])$ is, by definition,
a vector space equipped with a binary bracket product
which satisfies the Leibniz identity,
$$
[x,[y,z]]=[[x,y],z]+[y,[x,z]],
$$
where $x,y,z\in\g$.
If the bracket is anti-commutative, then
the Loday algebra is a Lie algebra.
Thus, Loday algebras are considered to be noncommutative
analogues of Lie algebras.\\
\indent
We consider a subspace of the Loday algebra
composed of the symmetric brackets,
$$
\g^{ann}:=\{[x,y]+[y,x] \ | \ x,y\in\g\}.
$$
This becomes a two side ideal of the Loday algebra
because
\begin{eqnarray*}
\ [[x,x],y]&=&[x,[x,y]]-[x,[x,y]]=0,\\
\ [x,[y,y]]&=&[[x,y],y]+[y,[x,y]],
\end{eqnarray*}
for any $x,y\in\g$.
Hence the quotient space $\g_{Lie}:=\g/\g^{ann}$ becomes a Lie algebra.
The projection $\g\to\g_{Lie}$, which is called a \textbf{Liezation},
is a universal arrow, that is,
$$
\Hom_{Lod}(\g,\Box{\h})\cong\Hom_{Lie}(\g_{Lie},\h),
$$
where $\h$ is an arbitrary Lie algebra
and $\Box$ is the forgetful functor
from the category of Lie algebras
to the one of Loday algebras.
One can define a Lie algebra action $\g_{Lie}\ot\g\to\g$ by
\begin{equation}\label{fact}
ad(\bar{x})(y):=[x,y].
\end{equation}
The Loday bracket is identified with this action.
\medskip\\
\indent
The free Loday algebra over a vector space
has the following form,
$$
\F_{\Lod}(V)=\bar{T}V:=\bigoplus_{n\ge 1}V^{\ot n}.
$$
Here the Loday bracket is defined by
$$
[x_{1},[x_{2},[...,[x_{n-1},x_{n}]]]]:=x_{1}\ot\cdot\cdot\cdot\ot x_{n},
$$
for any $x_{\cdot}\in V$.
\subsection{Perm-algebras (\cite{Chap,Chap2})}
An associative algebra $(A,*)$ is called a permutation algebra,
or perm-algebra for short, if $A$ satisfies
\begin{equation}\label{defperm}
(x*y)*z=(y*x)*z,
\end{equation}
where $x,y,z\in A$.\\
\indent
We consider a monomial on the perm-algebra.
$x_{1}*\cdots x_{n-1}*x_{n}\in A$.
Up to the $(-)*x_{n}$, $x_{1}*\cdots*x_{n-1}$
is regarded as a monomial of commutative algebra.
This implies that the free perm-algebra
over a vector space $V$ has the
following form,
$$
\F_{\Perm}(V)=S(V)\ot V,
$$
where $S(V)$ is the free ``unital" commutative algebra over $V$,
or the polynomial algebra over $V$.
The perm-multiplication on $\F_{\Perm}(V)$
is defined by
\begin{equation}\label{fp}
(f\ot x)*(g\ot y):=fxg\ot y,
\end{equation}
where $f,g\in S(V)$ and $x,y\in V$.
\begin{example}
Let $(C,d)$ be a differential graded (dg) commutative algebra.
Define a new product on $C$ by
$$
x*y:=-(-1)^{|x|}(dx)y.
$$
Then $(C,*)$ becomes a graded perm-algebra.
\end{example}
\subsection{Dual-prePoisson/Aguiar algebras (\cite{A})}
Let $(A,*,\{,\})$ be a perm-algebra
equipped with a Loday bracket.
It is called a dual-prePoisson algebra\footnote{
The operad of dual-prePoisson algebras is the Koszul ``dual" of
the one of pre-Poisson algebras.
As such the name is.
}, if the following three conditions are satisfied,
\begin{eqnarray}
\label{dpp1} \{x,y*z\}&=&\{x,y\}*z+y*\{x,z\},\\
\label{dpp2} \{x*y,z\}&=&x*\{y,z\}+y*\{x,z\},\\
\label{dpp3} \{x,y\}*z&=&-\{y,x\}*z,
\end{eqnarray}
where $x,y,z\in A$.
We call the axioms (\ref{dpp1}) and (\ref{dpp2})
the \textbf{biderivation} conditions of dual-prePoisson algebra.
In the graded cases,
(\ref{dpp1})-(\ref{dpp3}) have the following form,
\begin{eqnarray*}
\{x,y*z\}&=&\{x,y\}*z+(-1)^{(|x|-n)|y|}y*\{x,z\},\\
\{x*y,z\}&=&x*\{y,z\}+(-1)^{|x||y|}y*\{x,z\},\\
\{x,y\}*z&=&-(-1)^{(|x|-n)(|y|-n)}\{y,x\}*z,
\end{eqnarray*}
where $|x|$, $|y|$ are the degrees of elements
and $n$ is the one of the bracket.
The perm-multiplication also satisfies the natural sign convention,
$$
x*y*z=(-1)^{|x||y|}y*x*z,
$$
where we put $|*|:=0$.\\
\indent
We sometimes call the dual-prePoisson algebra
an \textbf{Aguiar algebra}, because the name ``dual-prePoisson" is long. 
\medskip\\
\indent
We consider a subspace of the dual-prePoisson algebra
generated by the symmetric elements,
$$
A^{ann}:=<x*y-y*x, \ \  \{x,y\}+\{y,x\}>.
$$
This is a two side ideal of the dual-prePoisson
algebra. Hence the quotient space $A/A^{ann}$
becomes a Poisson algebra.
One can easily check that $A\to A/A^{ann}$ is
the universal arrow, like the Liezation $\g\to\g_{Lie}$.
So we call the projection $A\to A/A^{ann}$ a \textbf{Poissonization}
of dual-prePoisson algebra.\\
\indent
From the axioms of dual-prePoisson algebras,
we obtain $A^{ann}*A=0$ and $\{A^{ann},A\}=0$.
Under a mild assumption, $A$ is
considered to be a semi-direct product algebra
$$
A=A_{Poiss}\ltimes A^{ann},
$$
where $A_{Poiss}$ is the result of the Poissonization $A\to A_{Poiss}$.
Conversely, given a Poisson algebra $P$
and a left Poisson module $M$,
the semi-direct product $P\ltimes M$
becomes a dual-prePoisson algebra,
whose dual-prePoisson products are defined by
\begin{eqnarray}
\label{mdp1}(p_{1}+m_{1})*(p_{2}+m_{2})&:=&p_{1}p_{2}+p_{1}\cdot m_{2},\\
\label{mdp2}\{p_{1}+m_{1},p_{2}+m_{2}\}&:=&\{p_{1},p_{2}\}+\{p_{1},m_{2}\},
\end{eqnarray}
where $p_{1},p_{2}\in P$, $m_{1},m_{2}\in M$
and where $\{p_{1},p_{2}\}$ is the Poisson bracket on $P$
and $p_{1}\cdot m_{2}$, $\{p_{1},m_{2}\}$ are
the left-module structures.
\begin{example}
Let $(P,d,\{,\}^{\p})$ be a dg Poisson algebra
with a Poisson bracket $\{,\}^{\p}$.
Define the new products by
\begin{eqnarray*}
x*y&:=&-(-1)^{|x|}(dx)y,\\
\{x,y\}&:=&-(-1)^{|x|}\{dx,y\}^{\p}.
\end{eqnarray*}
Then $(P,*,\{,\})$ becomes a graded dual-prePoisson algebra.
\end{example}
\indent
We consider the free dual-prePoisson algebra.
By the biderivation properties (\ref{dpp1}) and (\ref{dpp2}),
a monomial of dual-prePoisson algebra, $m$,
is decomposed into the form,
\begin{equation}\label{mmn}
m=\sum_{m^{\p},n}m^{\p}*n,
\end{equation}
where $m^{\p}$ and $n$ are monomials
and where $n$ has no $*$.
For example,
\begin{multline*}
\{\{x,y*z\},w\}=\\
=\{x,y\}*\{z,w\}+z*\{\{x,y\},w\}+
y*\{\{x,z\},w\}+\{x,z\}*\{y,w\},
\end{multline*}
where $\{z,w\}$, $\{\{x,y\},w\}$, $\{\{x,z\},w\}$
and $\{y,w\}$ are $n$ in (\ref{mmn}).\\
\indent
Since a dual-prePoisson algebra is a Loday algebra,
$n$ is regarded as a monomial of Loday algebra.
From the axioms of dual-prePoisson algebra,
$(x*y)*z=(y*x)*z$ and $\{x,y\}*z=-\{y,x\}*z$,
$m^{\p}$ is regarded as a monomial of Poisson algebra
up to $(-)*A$.
From this observation, we obtain
\begin{proposition}\label{prop0}
Let $V$ be a vector space.
The free dual-prePoisson
algebra over $V$ has the following form,
\begin{equation}\label{fdpp}
\F_{\Poiss}(V)\ot\F_{\Lod}(V),
\end{equation}
where $\F_{\Poiss}(V)$ is the free ``unital" Poisson algebra.
\end{proposition}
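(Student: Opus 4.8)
The plan is to equip $W:=\F_{\Poiss}(V)\ot\F_{\Lod}(V)$ with an explicit dual-prePoisson structure and then to verify that $\iota\colon V\to W$, $x\mapsto 1\ot x$, is the universal arrow, i.e.\ that $W$ is the free dual-prePoisson algebra on $V$; this is exactly the asserted form (\ref{fdpp}). Let $\ell\colon\F_{\Lod}(V)=\bar T V\to\F_{\Poiss}(V)$ be the \emph{Lie realization} sending a Loday monomial $x_{1}\ot\cdots\ot x_{n}=[x_{1},[\ldots,[x_{n-1},x_{n}]]]$ to the Poisson element $\{x_{1},\{\ldots,\{x_{n-1},x_{n}\}\}\}$; note that $\ell(x)=x$ for $x\in V$ and that $\ell$ factors as $\F_{\Lod}(V)\to\Lie(V)\hookrightarrow\F_{\Poiss}(V)$, the first map being the Liezation. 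Define the perm product by
$$
(f\ot u)*(g\ot v):=f\,\ell(u)\,g\ot v ,
$$
the factor $f\,\ell(u)\,g$ being taken in the commutative algebra underlying $\F_{\Poiss}(V)$; associativity and the perm identity (\ref{defperm}) then follow at once from commutativity of $\F_{\Poiss}(V)$, generalizing (\ref{fp}). For the bracket, put $\{1\ot u,1\ot v\}:=1\ot[u,v]$ with $[,]$ the free Loday bracket, and extend it to all of $W$ by forcing the biderivation axioms (\ref{dpp1})--(\ref{dpp2}): since $W$ is generated by $\iota(V)$ under $*$ and $\{,\}$ --- one obtains $1\ot u$ from iterated brackets of generators and then reaches $f\ot u$ by $*$-multiplying on the left by further generators --- this determines the bracket uniquely. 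Axiom (\ref{dpp3}) then reduces to antisymmetry of the Poisson bracket of $\F_{\Poiss}(V)$, because $\{1\ot x,1\ot y\}*(1\ot z)=\ell(x\ot y)\ot z=\{x,y\}\ot z$.

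The main obstacle is to produce this bracket as a genuinely well-defined operation on all of $W$ --- independent of the presentation of an element as an iterated operation on generators --- and to check that $(W,*,\{,\})$ satisfies the full list of dual-prePoisson axioms, in particular the Leibniz identity. I expect this to be a controlled but lengthy bookkeeping argument resting on three facts: that $\ell$ is a morphism intertwining the free Loday bracket with the Poisson bracket and annihilating symmetric brackets; that the commutative product of $\F_{\Poiss}(V)$ is a biderivation of its own bracket; and that $\F_{\Poiss}(V)$ acts on $W$ by Loday derivations compatibly with the $*$-action, $W$ being free as a module over $\F_{\Poiss}(V)$. The decomposition (\ref{mmn}) and the model computations in the Examples of \S2.2--2.3 are the templates to imitate here.

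It remains to establish the universal property. Given a dual-prePoisson algebra $A$ and a linear map $\phi\colon V\to A$, uniqueness of a dual-prePoisson extension $\Phi\colon W\to A$ of $\phi$ follows from $W$ being generated by $\iota(V)$. For existence, recall from \S2.3 that $A^{ann}*A=0$ and $\{A^{ann},A\}=0$; consequently the Poissonization $A_{Poiss}$ acts on $A$ from the left --- by $\bar f\cdot a:=f*a$ and $\bar f\bullet a:=\{f,a\}$ --- making $A$ a left Poisson module over $A_{Poiss}$. Let $\bar\phi\colon\F_{\Poiss}(V)\to A_{Poiss}$ be induced by $\phi$ followed by the projection $A\to A_{Poiss}$ together with the freeness of $\F_{\Poiss}$, and let $\phi_{\Lod}\colon\F_{\Lod}(V)\to A$ be induced by $\phi$ together with the freeness of $\F_{\Lod}$; set $\Phi(f\ot u):=\bar\phi(f)\cdot\phi_{\Lod}(u)$, with $\bar\phi(1)$ read as the identity. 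Compatibility of $\Phi$ with $*$ uses the key identity that the image of $\phi_{\Lod}(u)$ in $A_{Poiss}$ equals $\bar\phi(\ell(u))$ --- which holds because $\ell$ factors through the Liezation --- so that a Loday tail occupying a prefix position contributes precisely its Lie realization; compatibility with $\{,\}$ is the same computation carried through the Leibniz identity. Since $\Phi\iota=\phi$, this shows that $W=\F_{\Poiss}(V)\ot\F_{\Lod}(V)$ is the free dual-prePoisson algebra on $V$, as claimed.
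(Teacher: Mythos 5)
Your construction is, up to notation, the paper's own object: under the identifications $(\F_{\Lod}V)_{Lie}=\F_{\Lie}(V)$ and $\F_{\Poiss}=\F_{\Com}\F_{\Lie}$, your map $\ell$ is just the Liezation $u\mapsto\bar u$, and your $W$ with the product $f\,\ell(u)\,g\ot v$ is exactly $\Poly(\g^{*})=S(\g_{Lie})\ot\g$ for $\g=\F_{\Lod}(V)$. But there is a genuine gap at the decisive step: you never actually produce the bracket on $W$. Defining it on generators by $\{1\ot u,1\ot v\}:=1\ot[u,v]$ and ``extending by forcing the biderivation axioms'' only yields uniqueness; existence (well-definedness independent of how an element is built from generators) and the Leibniz identity are precisely the nontrivial content, and you explicitly defer them (``I expect this to be a controlled but lengthy bookkeeping argument''). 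In the paper this is the substance of Theorem \ref{theorema}: the bracket is given by the closed formulas (\ref{defact1})--(\ref{defact2}) and (\ref{defbra0}), which settles well-definedness at once, and the Leibniz identity is proved by the induction of Lemma \ref{lemmalemma}; the axioms (\ref{dpp1})--(\ref{dpp3}) are then checked directly. Without a closed formula (or a rewriting/consistency argument), your ``forcing'' step is an assertion, not a proof, and it is exactly the point where a proof could fail.

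The remaining difference is structural rather than mathematical. The paper does not re-verify anything for $\F_{\Lod}(V)$: its proof of Proposition \ref{prop0} invokes Theorem \ref{theorema} (free dual-prePoisson algebra over an arbitrary Loday algebra $\g$), specializes to $\g=\F_{\Lod}(V)$, and composes the two universal properties $V\to\F_{\Lod}(V)\to\F_{\Poiss}(V)\ot\F_{\Lod}(V)$, using $\F_{\Poiss}(V)=\F_{\Com}\big((\F_{\Lod}V)_{Lie}\big)$. Your universality argument (the map $\Phi(f\ot u)=\bar\phi(f)\cdot\phi_{\Lod}(u)$, using $A^{ann}*A=0$ and $\{A^{ann},A\}=0$ so that $A_{Poiss}$ acts on $A$) is essentially the paper's $\hat\phi$ from the proof of Theorem \ref{theorema}, phrased through the module action, and is fine once the structure on $W$ exists. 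So the fix is either to import Theorem \ref{theorema} (as the paper does), or to write down the bracket explicitly as in (\ref{defact1})--(\ref{defact2}), (\ref{defbra0}) and carry out the inductive verification of the Leibniz identity and of (\ref{dpp1})--(\ref{dpp3}) yourself.
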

\begin{proof}
We will give a proof of the proposition in Section 4.
\end{proof}
\begin{proposition}
$\lbar{\F}_{\Poiss}(V)\cong
\Big(\F_{\Poiss}(V)\ot\F_{\Lod}(V)\Big)_{Poiss}$,
where $\lbar{\F}_{\Poiss}(V)$ is the free Poisson algebra
of nonunital.
\end{proposition}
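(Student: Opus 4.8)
\medskip
\noindent\textbf{Proof plan.}\quad The plan is to combine Proposition~\ref{prop0} with a composition-of-adjoints argument, so that the statement becomes a formal consequence of universal properties rather than a hand computation. First, observe that every non-unital Poisson algebra $(P,\cdot,\{,\}^{\p})$ is automatically a dual-prePoisson algebra: a commutative associative product satisfies the perm identity (\ref{defperm}); the Leibniz rule for $\{,\}^{\p}$ gives (\ref{dpp1}) and, via commutativity of $\cdot$, also (\ref{dpp2}); and anti-symmetry of $\{,\}^{\p}$ yields (\ref{dpp3}). This produces a forgetful functor $\Box$ from Poisson algebras to dual-prePoisson algebras. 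As recalled in Section~2.3, the Poissonization $A\mapsto A_{Poiss}=A/A^{ann}$ is left adjoint to $\Box$: the relations $A^{ann}*A=0$ and $\{A^{ann},A\}=0$ make $A_{Poiss}$ a genuine Poisson algebra, and any dual-prePoisson morphism $A\to\Box P$ kills the generators $x*y-y*x$ and $\{x,y\}+\{y,x\}$ of $A^{ann}$ (since $\cdot$ is commutative and $\{,\}^{\p}$ anti-symmetric in $P$), hence factors uniquely through $A\to A_{Poiss}$.

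Now I compose adjunctions. By Proposition~\ref{prop0}, $\F_{\Poiss}(V)\ot\F_{\Lod}(V)$ is the free dual-prePoisson algebra on $V$. Composing this freeness with the adjunction of the previous paragraph yields natural isomorphisms
$$
\Hom_{\Poiss}\!\Big(\big(\F_{\Poiss}(V)\ot\F_{\Lod}(V)\big)_{Poiss},\,P\Big)\ \cong\ \Hom\big(\F_{\Poiss}(V)\ot\F_{\Lod}(V),\,\Box P\big)\ \cong\ \Hom_{\mathbf{Vect}}(V,P),
$$
for every Poisson algebra $P$, where on the right $P$ denotes its underlying vector space. Hence $\big(\F_{\Poiss}(V)\ot\F_{\Lod}(V)\big)_{Poiss}$ is a free Poisson algebra on $V$; and since $\F_{\Lod}(V)=\bigoplus_{n\ge1}V^{\ot n}$ has no degree-zero part, the free dual-prePoisson algebra — and therefore its Poissonization — has no unit, so the free Poisson algebra obtained is the non-unital one $\lbar{\F}_{\Poiss}(V)$. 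This proves the isomorphism.

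For concreteness the isomorphism can also be exhibited by hand: writing $\F_{\Poiss}(V)=S(\F_{\Lie}(V))$ and $\F_{\Lod}(V)=\bar{T}V$, passing to $A/A^{ann}$ makes the perm product commutative and forces the Loday bracket on the $\bar{T}V$-factor to be anti-symmetric, collapsing $\bar{T}V$ onto its Liezation $\F_{\Lie}(V)$, after which the surviving $*$-action absorbs $f\ot\xi$ into the symmetric-algebra product $f\xi$; the quotient is then $\bar{S}(\F_{\Lie}(V))=\lbar{\F}_{\Poiss}(V)$. The delicate point on this concrete route — which the adjunction argument sidesteps entirely — is the unit bookkeeping: one must check that fusing the two factors introduces no new relations and that the constant $1\in S(\F_{\Lie}(V))$ never survives on its own, precisely because the $\F_{\Lod}$-factor always contributes at least one generator. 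That is the main, though essentially routine, obstacle.
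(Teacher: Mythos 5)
Your argument is correct and is essentially the paper's own proof: the paper disposes of this proposition with the single line ``Because the Poissonization is universal,'' i.e.\ exactly your composition of the free dual-prePoisson adjunction (Proposition \ref{prop0}) with the Poissonization-forgetful adjunction, which you have simply written out in full detail (including the non-unital bookkeeping). No substantive difference in route, so nothing further to compare.
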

\begin{proof}
Because the Poissonization is universal.
\end{proof}

\section{Category of linear mappings}

The category of linear maps (\cite{LP}),
which is denoted by $\LM$, is a category
whose objects are linear maps $\rho:V_{1}\to V_{0}$
and morphisms $(F_{1},F_{0})$ are commutative diagrams
of liner maps,
$$
\begin{CD}
V_{1}@>{F_{1}}>>V^{\p}_{1} \\
@V{\rho}VV @VV{\rho^{\p}}V \\
V_{0}@>{F_{0}}>>V^{\p}_{0}.
\end{CD}
$$
One can define a tensor product on $\LM$ by
\begin{eqnarray}
\label{defotlm1}&&\rho\ot_{\LM}\rho^{\p}:=\rho\ot 1+1\ot \rho^{\p},\\
\label{defotlm2}&&\rho\ot_{\LM}\rho^{\p}:(V_{1}\ot V^{\p}_{0})
\oplus(V_{0}\ot V^{\p}_{1})\to V_{0}\ot V^{\p}_{0}.
\end{eqnarray}
If $F:=(F_{1},F_{0})$ and $G:=(G_{1},G_{0})$ are morphisms in $\LM$,
then the tensor product of $F$ and $G$ has the following form,
\begin{equation}\label{defotlm3}
F\ot_{\LM}G=(F_{1}\ot G_{0}\oplus F_{0}\ot G_{1},F_{0}\ot G_{0}).
\end{equation}
It is known that $(\LM,\ot_{\LM})$ becomes a symmetric monoidal category.
\medskip\\
\indent
A Lie algebra object (shortly, Lie object)
is by definition a Lie (al)gebra in $\LM$.
That is, a Lie object is a linear mapping
$\rho:\g_{1}{\to}\g_{0}$
equipped with a bracket morphism $\mu=(\mu_{1},\mu_{0})$,
$$
\begin{CD}
\g_{1}\ot\g_{0}\oplus\g_{0}\ot\g_{1}@>{\mu_{1}}>>\g_{1}\\
@VVV @VVV\\
\g_{0}\ot\g_{0}@>{\mu_{0}}>>\g_{0},
\end{CD}
$$
satisfying the axioms of Lie algebras, i.e.,
skewsymmetry and Jacobi law,
\begin{eqnarray*}
&&\mu(21)=-\mu,\\
&&\mu(1\ot_{\LM}\mu)=\mu(\mu\ot_{\LM}1)+\mu(1\ot_{\LM}\mu)(213),
\end{eqnarray*}
where $(21),(213)$ are usual actions of
symmetric groups $S_{2},S_{3}$, respectively.
The morphisms between Lie objects are defined
by the usual manner.\\
\indent
If $\rho:\g_{1}\to\g_{0}$ is a Lie object equipped with
a bracket $(\mu_{1},\mu_{0})$,
then $(\g_{0},\mu_{0})$ becomes a Lie algebra
and $(\g_{1},\mu_{1})$ becomes a $\g_{0}$-bimodule satisfying
the equivariant condition,
$$
\rho[\rho(x),y]_{\mu_{1}}=[\rho(x),\rho(y)]_{\mu_{0}},
$$
for any $x,y\in\g_{1}$.
The natural bracket, $[x,y]_{Lod}:=[\rho(x),y]_{\mu_{1}}$,
becomes a Loday bracket on $\g_{1}$.
\begin{proposition}
[\cite{LP}]
The Liezation $\g\to\g_{Lie}$ is a Lie object
and an arrow $\g\Rightarrow(\g\to\g_{Lie})$
is an adjoint functor with respect to
the forgetful functor, $(\h_{1}\to\h_{0})\Rightarrow h_{1}$, i.e.,
$$
\Hom_{Lieobj}(\g\to\g_{Lie},\h_{1}\to\h_{0})\cong\Hom_{Lod}(\g,\h_{1}),
$$
where LHS is the space of morphisms between Lie objects.
\label{proplp}
\end{proposition}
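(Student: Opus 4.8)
The plan is to unwind the definition of a Lie object recorded just above the statement and to reproduce, one categorical level up, the universal property of the Liezation $\Hom_{Lod}(\g,\Box\h)\cong\Hom_{Lie}(\g_{Lie},\h)$ already noted in Section 2.1. So the proof splits into two parts: first exhibiting the Lie-object structure on $\rho:\g\to\g_{Lie}$, then establishing the stated bijection of hom-sets and checking naturality.

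For the first part I would take on $\g_{Lie}$ the Lie bracket $\mu_{0}$ induced from the Loday bracket — legitimate because $\g^{ann}$ is a two-sided ideal, as shown — and define $\mu_{1}$ on $\g_{Lie}\ot\g$ by $\mu_{1}(\bar a\ot x):=[a,x]$, extended to $\g\ot\g_{Lie}$ by the skewsymmetry $\mu(21)=-\mu$ of $\LM$, i.e. $\mu_{1}(x\ot\bar a):=-[a,x]$. The only delicate point is well-definedness on the quotient: if $a\in\g^{ann}$ then $[a,x]=0$ for every $x$, which is exactly the identity $[[u,v],x]+[[v,u],x]=0$ obtained from two applications of the Leibniz rule. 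Then I would verify the three conditions packaged into ``Lie object'': (i) that $(\mu_{1},\mu_{0})$ is a morphism in $\LM$, i.e. $\rho\,\mu_{1}=\mu_{0}(\rho\ot_{\LM}\rho)$, which is precisely the equivariance $\rho[\rho(x),y]_{\mu_{1}}=[\rho(x),\rho(y)]_{\mu_{0}}$ and holds since $\rho$ is the Liezation, so $\overline{[a,x]}=[\bar a,\bar x]_{Lie}$; (ii) skewsymmetry, which reduces to skewsymmetry of $\mu_{0}$ together with the chosen sign in $\mu_{1}$; (iii) the Jacobi law in $\LM$, which — spelling out $\ot_{\LM}$ on morphisms via \eqref{defotlm3} and the $S_{3}$-action — reduces on the $\g_{0}$-component to Jacobi for $\mu_{0}$ and on the $\g_{1}$-component to the assertion that $\bar a\mapsto[a,-]$ is a Lie action, i.e. $[[a,b],x]=[a,[b,x]]-[b,[a,x]]$, the Leibniz identity once more. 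By construction the natural Loday bracket $[x,y]_{Lod}:=[\rho(x),y]_{\mu_{1}}=[x,y]$ recovers the given one.

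For the adjunction, given a Loday morphism $f:\g\to\h_{1}$ with $\h_{1}\to\h_{0}$ a Lie object, I would set $F_{1}:=f$ and $F_{0}(\bar x):=\rho_{\h}(f(x))$; this is forced by commutativity of the $\LM$-square and is well-defined because for $x=[u,v]+[v,u]\in\g^{ann}$ one gets $\rho_{\h}f(x)=[\rho_{\h}f(u),\rho_{\h}f(v)]_{\mu_{0}}+[\rho_{\h}f(v),\rho_{\h}f(u)]_{\mu_{0}}=0$ by skewsymmetry of the bracket of $\h_{0}$, using that $f$ is a Loday morphism and $[a,b]_{Lod,\h_{1}}=[\rho_{\h}(a),b]_{\mu_{1}}$. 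Compatibility of $(F_{1},F_{0})$ with the bracket morphisms is then immediate: on $\g_{Lie}\ot\g$ one has $F_{1}\mu^{\g}_{1}(\bar a\ot x)=f([a,x])=[f(a),f(x)]_{Lod,\h_{1}}=\mu^{\h}_{1}(\rho_{\h}f(a)\ot f(x))=\mu^{\h}_{1}(F_{0}\bar a\ot F_{1}x)$, and the $\mu_{0}$-compatibility follows from this by surjectivity of $\rho_{\g}$. Conversely, from any Lie-object morphism $(F_{1},F_{0})$ the commuting square gives $F_{0}\bar a=\rho_{\h}F_{1}a$, whence $F_{1}([a,x]_{Lod})=F_{1}\mu^{\g}_{1}(\bar a\ot x)=\mu^{\h}_{1}(F_{0}\bar a\ot F_{1}x)=[F_{1}a,F_{1}x]_{Lod}$, so $F_{1}$ is a Loday morphism. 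The assignments $(F_{1},F_{0})\mapsto F_{1}$ and $f\mapsto(f,F_{0})$ are mutually inverse and visibly natural in both variables, which is the claim.

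I expect the only genuine obstacle to be notational rather than conceptual: pinning down the ``dictionary'' between the internal Lie-algebra axioms in $(\LM,\ot_{\LM})$ — especially the $S_{3}$-action in the Jacobi law and the action of $\ot_{\LM}$ on morphisms from \eqref{defotlm1}--\eqref{defotlm3} — and the concrete identities on the pair $(\mu_{0},\mu_{1})$, keeping careful track of which summand of $(V_{1}\ot V_{0})\oplus(V_{0}\ot V_{1})$ a given computation lives on. Once that dictionary is fixed, every verification collapses to the Leibniz identity, the ideal property of $\g^{ann}$, and the surjectivity of the Liezation.
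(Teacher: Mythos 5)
Your proposal is correct and takes essentially the same approach as the paper's sketch proof: you equip the Liezation with the bracket $(\mu_{1},\mu_{0})$ given by the induced Lie bracket on $\g_{Lie}$ and the $\g_{Lie}$-action on $\g$, reduce the Lie-object axioms (equivariance, skewsymmetry, Jacobi in $\LM$) to the Leibniz identity, and obtain the adjunction from the universal property of the Liezation. The only difference is that where the paper cites that universality to produce the unique $F_{0}:\g_{Lie}\to\h_{0}$, you verify it by hand (well-definedness of $F_{0}=\rho_{\h}\circ f$ on the quotient by $\g^{ann}$), which is the same argument with the details filled in.
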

\begin{proof}
(Sketch)
Let $\g$ be a Loday algebra.
The Liezation $\g\to\g_{Lie}$ has a bracket
\begin{eqnarray*}
\mu=
\left(
\begin{array}{c}
\mu_{1}\\
\mu_{0}
\end{array}
\right),
\end{eqnarray*}
where $\mu_{0}$ is the Lie bracket on $\g_{Lie}$
and $\mu_{1}$ is the action of $\g_{Lie}$ to $\g$.
From (\ref{defotlm3}),
\begin{eqnarray*}
\mu(1\ot_{\LM}\mu)=\mu
\left(
\begin{array}{c}
1\ot\mu_{0}\oplus 1\ot\mu_{1}\\
1\ot\mu_{0}
\end{array}
\right)
=
\left(
\begin{array}{c}
\mu_{1}(1\ot\mu_{0}\oplus 1\ot\mu_{1})\\
\mu_{0}(1\ot\mu_{0})
\end{array}
\right).
\end{eqnarray*}
By a direct computation, one can check the Jacobi identity.\\
\indent
Given a Loday algebra homomorphism $\g\to\h_{1}$
in $\Hom_{Lod}(\g,\h_{1})$,
by the universality of the Liezation $\g\to\g_{Lie}$,
there exists a unique Lie algebra morphism
of $\g_{Lie}\to\h_{0}$.
Thus we obtain a morphism of Lie objects.
\end{proof}

We introduce the notion of {\em Poisson object}.
The Poisson objects are by definition
linear mappings $P_{1}\to P_{0}$ equipped with
the pairs of commutative associative
multiplications $\nu$ and Lie brackets $\mu$ satisfying
the distributive law,
$$
\mu(\nu\ot_{\LM}1)=\nu(1\ot_{\LM}\mu)+\nu(1\ot_{\LM}\mu)(213).
$$
The morphisms between Poisson objects
are also defined by the usual manner.\\
\indent
If $P_{1}\to P_{0}$ is a Poisson object,
then $P_{0}$ becomes a Poisson algebra
and $P_{1}$ becomes a $P_{0}$-bimodule satisfying
certain equivariant conditions.
Then, $P_{1}$ becomes a dual-prePoisson algebra
by a similar method with (\ref{mdp1}) and (\ref{mdp2}).
The Poisson version of  Proposition \ref{proplp} is as follows.
\begin{proposition}
If $A$ is a dual-prePoisson algebra,
the Poissonization $A\to A_{Poiss}$
is a Poisson object in $\LM$,
and in a similar way the arrow
$A\Rightarrow(A\to A_{Poiss})$ has the universality
$$
\Hom_{Poissobj}(A\to A_{Poiss},P_{1}\to P_{0})
\cong
\Hom_{Agu}(A,P_{1}),
$$
where LHS (resp. RHS) is the space of morphisms
between Poisson objects
(resp. Aguiar/dual-prePoisson algebras)
and where $P_{1}\to P_{0}$ is an arbitrary Poisson object.
\end{proposition}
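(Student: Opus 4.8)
The plan is to mimic, almost verbatim, the proof of Proposition~\ref{proplp}, replacing ``Lie'' by ``Poisson'', ``Loday/Leibniz'' by ``dual-prePoisson (Aguiar)'', and the Liezation $\g\to\g_{Lie}$ by the Poissonization $A\to A_{Poiss}$.

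First I would promote $\rho:A\to A_{Poiss}$ to a Poisson object. We already know $A_{Poiss}=A/A^{ann}$ is a Poisson algebra; write $\nu_{0}$ for its commutative product and $\mu_{0}$ for its Poisson bracket. Since $A^{ann}*A=0$ and $\{A^{ann},A\}=0$, the perm-product and the Loday bracket of $A$ descend in their left argument to make $A$ a left $A_{Poiss}$-module in two ways, and after symmetrising along the braiding of $\LM$ these combine into morphisms $\nu_{1},\mu_{1}:A_{Poiss}\ot A\oplus A\ot A_{Poiss}\to A$. I would then check that $(A,\nu_{1},\mu_{1})$ is a Poisson bimodule over $A_{Poiss}$ with $\rho$ equivariant: associativity and the perm-relation (\ref{defperm}) handle $\nu_{1}$; the biderivation axioms (\ref{dpp1})--(\ref{dpp2}) handle the compatibility of $\mu_{1}$ with $\nu_{1}$; the Leibniz identity handles $\mu_{1}$ alone; and equivariance is immediate because product and bracket simply descend. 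Setting $\nu:=(\nu_{1},\nu_{0})$, $\mu:=(\mu_{1},\mu_{0})$, regarding them as morphisms in $\LM$ and expanding $\ot_{\LM}$ via (\ref{defotlm3}) exactly as in Proposition~\ref{proplp}, the Poisson-algebra axioms on $A_{Poiss}$ together with the bimodule axioms on $A$ assemble into commutativity, the Jacobi identity, and the distributive law for $(\nu,\mu)$; hence $\rho$ is a Poisson object. (In the graded case the signs in (\ref{dpp1})--(\ref{dpp3}) are exactly the braiding signs of $\LM$, so nothing new is needed.)

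Next I would establish the universality, again along the lines of Proposition~\ref{proplp}. Fix an arbitrary Poisson object $\rho^{P}:P_{1}\to P_{0}$; then $P_{0}$ is a Poisson algebra and $P_{1}$ carries the dual-prePoisson structure obtained by the method of (\ref{mdp1})--(\ref{mdp2}). Given $f\in\Hom_{Agu}(A,P_{1})$, the composite $\rho^{P}\c f:A\to P_{0}$ is a dual-prePoisson morphism into a Poisson algebra (both $f$ and $\rho^{P}$ respect perm-products and brackets), so by the universal property of the Poissonization noted in Section~2 it factors uniquely as $\rho^{P}\c f=\bar f\c\rho$ with $\bar f:A_{Poiss}\to P_{0}$ a morphism of Poisson algebras. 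I would then verify that $(\bar f,f)$ is a morphism of Poisson objects: its square commutes by construction, and compatibility with $\nu$ and $\mu$ reduces, after unwinding $\ot_{\LM}$, precisely to $f$ respecting the perm-product and Loday bracket of $A$. Conversely, a morphism of Poisson objects $(g_{1},g_{0}):(A\to A_{Poiss})\to(P_{1}\to P_{0})$ restricts to a dual-prePoisson morphism $g_{1}:A\to P_{1}$ by reading the same computation backwards. The two assignments $f\mapsto(\bar f,f)$ and $(g_{1},g_{0})\mapsto g_{1}$ are mutually inverse: one composite is the identity outright, and for the other $\bar{g_{1}}=g_{0}$ since $\rho:A\to A_{Poiss}$ is surjective, which forces the factorization through $\rho$ to be unique. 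This yields the bijection $\Hom_{Poissobj}(A\to A_{Poiss},P_{1}\to P_{0})\cong\Hom_{Agu}(A,P_{1})$.

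The only genuine work is in the first step: matching the ``mixed'' $\LM$-components of the distributive law $\mu(\nu\ot_{\LM}1)=\nu(1\ot_{\LM}\mu)+\nu(1\ot_{\LM}\mu)(213)$ with the biderivation axioms (\ref{dpp1}), (\ref{dpp2}) and the annihilation identity (\ref{dpp3}), and confirming these are exactly what is needed to make $A$ a well-defined Poisson bimodule over $A_{Poiss}$ -- this is where $A^{ann}*A=0$, $\{A^{ann},A\}=0$, and the perm-identities $(x*y)*z=(y*x)*z$ and $\{x,y\}*z=-\{y,x\}*z$ all come into play. Once this bookkeeping is done, the second step is formal, exactly as in Proposition~\ref{proplp}.
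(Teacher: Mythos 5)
Your proposal is correct and takes precisely the route the paper intends: the paper states this proposition without proof, relying on the analogy with Proposition~\ref{proplp}, and your argument carries out that analogy faithfully --- descending the perm-product and Loday bracket along $A\to A_{Poiss}$ (using $A^{ann}*A=0$, $\{A^{ann},A\}=0$) to obtain the Poisson-object structure, and deriving the adjunction from the universality of the Poissonization exactly as the Liezation is used there. In effect you have supplied the details the paper omits, and the bookkeeping you flag (matching the mixed $\ot_{\LM}$-components of the distributive law with (\ref{dpp1})--(\ref{dpp3})) is indeed the only substantive verification.
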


\section{Loday-Poisson algebras}
In this section we solve the question in Introduction.

\subsection{Polynomial cases}

Let $\g$ be a Loday algebra.
We define a space of nonstandard polynomial functions
on the dual space $\g^{*}$ as
$$
\Poly(\g^{*}):=S(\g_{Lie})\ot \g.
$$
Here $1\ot\g(\cong\g)$ is the space of linear functions on $\g^{*}$.
An associative multiplication on $\Poly(\g^{*})$ is defined by
$$
(f\ot x)*(g\ot y)=f\bar{x}g\ot y,
$$
where $f,g\in S(\g_{Lie})$, $x,y\in\g$
and $\bar{x}$ is the image of $x$ by the Liezation $\g\to\g_{Lie}$.
\begin{lemma}
The space $(\Poly(\g^{*}),*)$ becomes a perm-algebra.
\end{lemma}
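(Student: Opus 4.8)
The plan is to verify the two defining axioms of a perm-algebra---associativity of $*$ and the permutation identity (\ref{defperm})---by a direct computation on generating elements, reducing everything to the associativity and commutativity of the polynomial algebra $S(\g_{Lie})$. First I would record that the product $(f\ot x)*(g\ot y):=f\bar xg\ot y$ is well defined: it is bilinear in the pair $(f,g)\in S(\g_{Lie})\times S(\g_{Lie})$, linear in $y\in\g$, and linear in $x\in\g$ because the Liezation $\g\to\g_{Lie}$, $x\mapsto\bar x$, is linear; moreover $f\bar xg$ is an honest product in the commutative algebra $S(\g_{Lie})$. Hence $*$ extends to a bilinear map $\Poly(\g^{*})\ot\Poly(\g^{*})\to\Poly(\g^{*})$ and it suffices to check the axioms on elements of the form $f\ot x,\ g\ot y,\ h\ot z$ with $f,g,h\in S(\g_{Lie})$ and $x,y,z\in\g$.

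For associativity I would compute
$$
\big((f\ot x)*(g\ot y)\big)*(h\ot z)=(f\bar xg\ot y)*(h\ot z)=f\bar xg\bar yh\ot z,
$$
and, on the other side,
$$
(f\ot x)*\big((g\ot y)*(h\ot z)\big)=(f\ot x)*(g\bar yh\ot z)=f\bar xg\bar yh\ot z,
$$
which agree by associativity of $S(\g_{Lie})$. For the permutation identity I would compare the left-hand side above with
$$
\big((g\ot y)*(f\ot x)\big)*(h\ot z)=(g\bar yf\ot x)*(h\ot z)=g\bar yf\bar xh\ot z,
$$
and observe that $f\bar xg\bar yh=g\bar yf\bar xh$ since $S(\g_{Lie})$ is commutative. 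This yields $(a*b)*c=(b*a)*c$, i.e. (\ref{defperm}).

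I do not expect any genuine obstacle here: the construction $\Poly(\g^{*})=S(\g_{Lie})\ot\g$ with the product above is modeled exactly on the free perm-algebra $\F_{\Perm}(V)=S(V)\ot V$ and its multiplication (\ref{fp}) from Section 2.2, the only difference being that the ``polynomial part'' lives over $\g_{Lie}$ while the final tensor slot ranges over $\g$, the two being linked by the Liezation. The single point that deserves a line of care is the well-definedness and bilinearity noted in the first paragraph, in particular the use of the linearity of $x\mapsto\bar x$; once that is in place the two perm-algebra axioms follow immediately from the commutative--associative structure of $S(\g_{Lie})$.
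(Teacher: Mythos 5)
Your verification is correct, and it is exactly the argument the paper intends: the lemma is stated without proof there, precisely because $\Poly(\g^{*})=S(\g_{Lie})\ot\g$ with the product $(f\ot x)*(g\ot y)=f\bar xg\ot y$ is modeled on the free perm-algebra $\F_{\Perm}(V)=S(V)\ot V$ of Section 2.2, so both axioms reduce, as you show, to associativity and commutativity of $S(\g_{Lie})$ together with linearity of the Liezation $x\mapsto\bar x$. Nothing further is needed.
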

We define an action of $S(\g_{Lie})$ to $\Poly(\g^{*})$.
For any $f\in S(\g_{Lie})$, $g\ot y\in\Poly(\g^{*})$,
\begin{eqnarray}
\label{defact1}\{f,g\ot y\}&:=&\{f,g\}\ot y+g\{f,y\},\\
\label{defact2}\{\bar{x}_{1}\cdot\cdot\cdot\bar{x}_{n},y\}&:=&
\sum_{i}
\bar{x}_{1}\cdot\cdot\cdot
\bar{x}_{i-1}\bar{x}_{i+1}\cdot\cdot\cdot
\bar{x}_{n}\ot[x_{i},y],
\end{eqnarray}
where
$\{f,g\}$ is the Poisson brackets on $S(\g_{Lie})$,
$\bar{x}_{1}\cdot\cdot\cdot\bar{x}_{n}$ is a monomial in $S(\g_{Lie})$
and the bracket $\{f,y\}$ in (\ref{defact1})
is defined by (\ref{defact2})
which is well-defined (cf. (\ref{fact})).
For example,
$\{\bar{x}\bar{y},g\ot z\}=\{\bar{x}\bar{y},g\}\ot z+
g\bar{x}\ot[y,z]+g\bar{y}\ot[x,z]$.
\begin{definition}
For any $f\ot x,g\ot y\in \Poly(\g^{*})$,
\begin{equation}\label{defbra0}
\{f\ot x,g\ot y\}:=\{f\bar{x},g\ot y\}.
\end{equation}
\end{definition}
We remark that the bracket (\ref{defbra0}) is 
not skewsymmetric, even if $\g$ is Lie.
\begin{lemma}
Define a projection $\Poly(\g^{*})\to\lbar{S}(\g_{Lie})$ by
$$
\overline{f\ot x}=f\bar{x},
$$
where $\lbar{S}(\g_{Lie})$ is the Lie-Poisson algebra of non-unital.
The projection preserves the perm-multiplication and the bracket on $\Poly(\g^{*})$,
\begin{eqnarray*}
\overline{(f\ot x)*(g\ot y)}&=&(f\bar{x})(g\bar{y}),\\
\overline{\{f\ot x,g\ot y\}}&=&\{f\bar{x},g\bar{y}\}.
\end{eqnarray*}
\end{lemma}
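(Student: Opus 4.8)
The plan is to check the two displayed identities straight from the definitions (\ref{defbra0}), (\ref{defact1}) and (\ref{defact2}), reducing everything to the single fact that the Liezation $\g\to\g_{Lie}$ intertwines the Loday bracket with the Lie bracket, i.e.\ $\overline{[x,y]}=[\bar x,\bar y]$ for $x,y\in\g$. The multiplicative statement is immediate: by the definition of $*$ on $\Poly(\g^{*})$ and the commutativity of $S(\g_{Lie})$,
$$
\overline{(f\ot x)*(g\ot y)}=\overline{f\bar x g\ot y}=f\bar x g\bar y=(f\bar x)(g\bar y).
$$

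For the bracket I would first invoke (\ref{defbra0}) to push the left slot into $S(\g_{Lie})$: setting $h:=f\bar x$, one has $\{f\ot x,g\ot y\}=\{h,g\ot y\}$, and (\ref{defact1}) gives $\{h,g\ot y\}=\{h,g\}\ot y+g\{h,y\}$, so that
$$
\overline{\{f\ot x,g\ot y\}}=\{h,g\}\,\bar y+\overline{g\{h,y\}}.
$$
By linearity in $h$ I may take $h=\bar z_{1}\cdots\bar z_{m}$ a monomial; then (\ref{defact2}) yields $\{h,y\}=\sum_{i}\bar z_{1}\cdots\widehat{\bar z_{i}}\cdots\bar z_{m}\ot[z_{i},y]$, hence
$$
\overline{g\{h,y\}}=\sum_{i}g\,\bar z_{1}\cdots\widehat{\bar z_{i}}\cdots\bar z_{m}\,\overline{[z_{i},y]}=\sum_{i}g\,\bar z_{1}\cdots\widehat{\bar z_{i}}\cdots\bar z_{m}\,[\bar z_{i},\bar y].
$$
On the other hand the Lie-Poisson bracket on $\lbar{S}(\g_{Lie})$ is, by construction (conditions (C1)--(C4) of the Introduction), the unique biderivation of the polynomial algebra extending the Lie bracket of $\g_{Lie}$, so the Leibniz rule in the second and then in the first argument gives
$$
\{h,g\bar y\}=\{h,g\}\,\bar y+g\{h,\bar y\}=\{h,g\}\,\bar y+\sum_{i}g\,\bar z_{1}\cdots\widehat{\bar z_{i}}\cdots\bar z_{m}\,[\bar z_{i},\bar y].
$$
Comparing the last two displays term by term (the summand $\{h,g\}\bar y$ is literally the same element of $S(\g_{Lie})$ on both sides) proves $\overline{\{f\ot x,g\ot y\}}=\{f\bar x,g\bar y\}$.

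No step is deep, so there is no real obstacle; the only point that genuinely needs attention is the well-definedness of the auxiliary bracket $\{f,y\}$ of (\ref{defact2}) — its independence of the chosen monomial expression of $f$ — which amounts (cf.\ (\ref{fact})) to $\{-,y\}$ being a well-defined derivation of $S(\g_{Lie})$ valued in the $\g_{Lie}$-module $\g$. Granting this, the term-by-term matching above is forced: in each slot both brackets are the unique derivation/biderivation extensions of their values on generators, and on generators they coincide by the Liezation identity. (In the terminology of Section 3 this Lemma records that $\overline{(-)}$ is precisely the Poissonization $\Poly(\g^{*})\to\Poly(\g^{*})_{Poiss}$.)
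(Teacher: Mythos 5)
Your proof is correct, and it is exactly the routine verification the paper leaves implicit: the lemma is stated there without proof, the intended argument being precisely your reduction of both sides to the biderivation (Leibniz) expansions together with the fact that the Liezation is a bracket homomorphism, $\overline{[x,y]}=[\bar x,\bar y]$, plus the well-definedness of (\ref{defact2}) via (\ref{fact}), which you correctly flag and which the paper also takes as given. Nothing is missing; your closing remark that the projection is the Poissonization also agrees with the paper's later observation.
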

The main result of this note is as follows.
\begin{theorem}\label{theorema}
The algebra $(\Poly(\g^{*}),*,\{,\})$ is the free dual-prePoisson algebra over $\g$
and $\g$ is a subalgebra of $\Poly(\g^{*})$.
Namely $\Poly(\g^{*})$ satisfies the conditions (A1)-(A4)
in Introduction.
\end{theorem}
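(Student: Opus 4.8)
The plan is to verify the four conditions (A1)--(A4) in turn, using the structural description of $\Poly(\g^{*}) = S(\g_{Lie})\ot\g$ together with Proposition~\ref{prop0} and the universality of the Liezation. First, for (A1) and the perm-axiom, I would note that $(\Poly(\g^{*}),*)$ is precisely $\F_{\Perm}(\g_{Lie})$ with the first tensor factor labelled by $S(\g_{Lie})$ and the second factor replaced by $\g$ (via $\g\twoheadrightarrow\g_{Lie}$ in the middle slot of the product (\ref{fp})); the perm-identity $(x*y)*z=(y*x)*z$ then follows from commutativity of $S(\g_{Lie})$ exactly as in Section~2.2. For the Loday-algebra structure on $\Poly(\g^{*})$, the bracket (\ref{defbra0}) reduces, via the biderivation rule (\ref{defact1}), to the action (\ref{defact2}) of the \emph{Lie-Poisson} algebra $S(\g_{Lie})$ on the module $\g$; the Leibniz identity for $\{,\}$ will follow from the fact that $S(\g_{Lie})$ is an honest Poisson algebra acting on $\g$ through $ad$, so the ``symmetric-bracket'' obstruction lives entirely in the $\g^{ann}$ direction and the Leibniz identity is inherited. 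Concretely I expect to check that $\Poly(\g^{*})$ is the semidirect-product dual-prePoisson algebra $S(\g_{Lie})\ltimes M$ for the left Poisson module $M$ built from $\g$, so that (A1), (A2) (the biderivation conditions (\ref{dpp1})--(\ref{dpp2})) and (\ref{dpp3}) hold automatically by the construction (\ref{mdp1})--(\ref{mdp2}).

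Second, for (A3) I would identify $\g$ with $1\ot\g\subset\Poly(\g^{*})$ and check that the restriction of (\ref{defbra0}) to this subspace is the original Loday bracket: indeed $\{1\ot x,1\ot y\}=\{\bar{x},1\ot y\}=\{\bar x,1\}\ot y+1\cdot\{\bar x,y\}=1\ot[x,y]$ by (\ref{defact2}), so $1\ot\g$ is a Loday subalgebra. (Note that $1\ot\g$ is \emph{not} a perm-subalgebra since $(1\ot x)*(1\ot y)=\bar x\ot y\notin 1\ot\g$, but that is fine: (A3) only asks for a Loday subalgebra.)

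Third, and this is where the real content lies, I would prove freeness (A4) together with the full ``free dual-prePoisson'' claim. By Proposition~\ref{prop0}, the free dual-prePoisson algebra on the vector space $\g$ is $\F_{\Poiss}(\g)\ot\F_{\Lod}(\g)$; I need to show that the dual-prePoisson algebra generated \emph{inside} $\Poly(\g^{*})$ by the subspace $1\ot\g$, subject only to the relation that $\{,\}$ restricted to $1\ot\g$ equals the given Loday bracket of $\g$, is all of $S(\g_{Lie})\ot\g$, with no further relations. One direction---that $1\ot\g$ generates $\Poly(\g^{*})$---is a spanning argument: using the decomposition (\ref{mmn}), any monomial is a sum of terms $m'*n$ with $n$ $*$-free, hence $n\in 1\ot\g$ is a Loday monomial in the generators, and $m'$ is (up to $(-)*\Poly(\g^{*})$) a Poisson monomial, which by (\ref{defact1})--(\ref{defact2}) lands in $S(\g_{Lie})$ applied to $\g$; iterating the perm-product (\ref{fp}) and the action shows every $f\ot x$ with $f$ a monomial of length $k$ is reached from generators in $k{+}1$ bracket/product operations. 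The other direction---no further relations, i.e. the universal map $\F_{\Poiss}(\g)\ot\F_{\Lod}(\g)\to\Poly(\g^{*})$ descends to an isomorphism after imposing exactly the Loday relations of $\g$---is the main obstacle. The clean way to get it is via the categorical picture of Section~3: the Poissonization $\Poly(\g^{*})\to\lbar S(\g_{Lie})$ is, by the last Lemma, compatible with both products, and by the universality statement for Poisson objects together with Proposition~\ref{proplp}, the Poisson object $(\Poly(\g^{*})\to\lbar S(\g_{Lie}))$ must coincide with the \emph{free} Poisson object over the Lie object $(\g\to\g_{Lie})$; since forming the free dual-prePoisson algebra over $\g$ is the same as forming the free Poisson object over $\g\to\g_{Lie}$ and reading off the total space $P_1$, this forces $\Poly(\g^{*})\cong\F_{\Poiss}(\g)\ot\F_{\Lod}(\g)$ as dual-prePoisson algebras modulo the relations of $\g$.

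Finally, combining the three parts: (A1), (A2), (\ref{dpp3}) come from the semidirect-product realization; (A3) from the explicit bracket computation on $1\ot\g$; and (A4)---freeness of $\{,\}$ over $[,]$---from the freeness/universality argument of the previous paragraph, which also simultaneously establishes the first assertion of the theorem (that $\Poly(\g^{*})$ \emph{is} the free dual-prePoisson algebra over $\g$). I expect the only subtle point to be bookkeeping the identification $\F_{\Poiss}(\g)\ot\F_{\Lod}(\g)\twoheadrightarrow S(\g_{Lie})\ot\g$: the left tensor factor collapses $\F_{\Poiss}(\g)=S(\F_{\Lie}(\g))$ down to $S(\g_{Lie})$ because the perm-relation (\ref{defperm}) together with (\ref{dpp3}) already forces everything to the left of the last $*$ to be symmetric, i.e. to factor through the Poissonization/Liezation; making that collapse precise and checking it introduces no collapse in the rightmost ($\F_{\Lod}(\g)=\bar T\g$) factor is the bulk of the work, and I would do it by tracking the $*$-free tail $n$ through the reduction (\ref{mmn}).
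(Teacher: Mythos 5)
Your handling of (A1)--(A3) is close in spirit to the paper, but two caveats. First, $\Poly(\g^{*})$ is not literally a semidirect product $S(\g_{Lie})\ltimes M$ in the sense of (\ref{mdp1})--(\ref{mdp2}): there is no direct-summand copy of $S(\g_{Lie})$ inside $S(\g_{Lie})\ot\g$; what you actually have is the module-type structure obtained by letting the left argument act through the projection $f\ot x\mapsto f\bar{x}$. Second, the assertion that this action is a genuine Poisson-module action (``the Leibniz identity is inherited'') is precisely the nontrivial point, not a freebie: the paper proves it as Lemma \ref{lemmalemma}, showing $L(f,g,h\ot z)=0$ by induction on polynomial degree, the base case $L(\bar{x},\bar{y},z)=0$ being exactly the Leibniz identity of $\g$ and the inductive step using the biderivation rules. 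Your sketch defers this computation rather than performing it, though the idea is salvageable.

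The genuine gap is in (A4). You claim that the Poissonization $\Poly(\g^{*})\to\lbar{S}(\g_{Lie})$ ``must coincide with the free Poisson object over the Lie object $\g\to\g_{Lie}$'' by the universality statement of Section 3 together with Proposition \ref{proplp}, and then read off freeness of $\Poly(\g^{*})$ as a dual-prePoisson algebra. But the universal property of the Poissonization only converts morphisms out of a given dual-prePoisson algebra into morphisms of Poisson objects out of its Poissonization; it gives no information about freeness over the Lie object unless you already know that $\Poly(\g^{*})$ is free over $\g$ --- which is the statement being proved. In the paper this ``Lie-Poisson object'' freeness is the Corollary \emph{deduced from} Theorem \ref{theorema}, not an input to it, so your argument is circular. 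Your appeal to Proposition \ref{prop0} has the same flavour: its proof is deferred to Section 4 and uses the theorem (applied to the free Loday algebra $\F_{\Lod}(V)$). What is missing, and what the paper actually supplies, is the explicit universal morphism: given a dual-prePoisson algebra $P$ and a Loday homomorphism $\phi:\g\to P$, set $\hat{\phi}\big((\bar{x}_{1}\cdots\bar{x}_{n})\ot y\big):=\phi(x_{1})*\cdots*\phi(x_{n})*\phi(y)$, check well-definedness from the relations $(1*2)*3=(2*1)*3$ and $\{1,2\}_{P}*3=-\{2,1\}_{P}*3$ in $P$, verify compatibility with $*$ and with the bracket via (\ref{defact2}) and (\ref{defact1}), and note uniqueness because $1\ot\g$ generates $\Poly(\g^{*})$. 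Some argument of this concreteness is required; without it your freeness claim is unsupported.
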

\begin{proof}
We show that the bracket (\ref{defbra0}) satisfies the Leibniz identity.
It suffices to prove that the action (\ref{defact1}) is {\em Lie algebraic}.
\begin{lemma}\label{lemmalemma}
For any $f,g\in S(\g_{Lie})$, $h\ot z\in \Poly(\g^{*})$,
$L(f,g,h\ot z)=0$, where
$L$ is the Leibnizator
$L(1,2,3):=\{1,\{2,3\}\}-\{\{1,2\},3\}-\{2,\{1,3\}\}$.
\end{lemma}
\begin{proof}
By the biderivation property, we have
$$
L(f,g,h\ot z)=L(f,g,h)\ot z+hL(f,g,z)=hL(f,g,z).
$$
Here the Jacobi identity $L(f,g,h)=0$ is used.
So it suffices to show that $L(f,g,z)=0$.
When $f=\bar{x}$ and $g=\bar{y}$,
$L(\bar{x},\bar{y},z)=0$ is equivalent to
the Leibniz identity of $\g$.
By the biderivation properties again,
$$
L(f\bar{x},g,z)=fL(\bar{x},g,z)+\bar{x}L(f,g,z).
$$
By the assumption of induction with respect to
the degree of polynomials, we have
$L(\bar{x},g,z)=L(f,g,z)=0$,
which gives $L(f\bar{x},g,z)=0$.
In the same way $L(f,g\bar{x},z)=0$ is shown.
\end{proof}
From this lemma, we obtain the desired identity,
$$
L(f\ot x,g\ot y,h\ot z)
=L(f\bar{x},g\bar{y},h\ot z)=0.
$$
By a direct computation, one can check
the axioms (\ref{dpp1})-(\ref{dpp3}).
Thus, $\Poly(\g^{*})$ satisfies (A1) and (A2) in Introduction.
The Leibniz algebra $\g$ is identified with
$1\ot \g$ as the linear subalgebra of $\Poly(\g^{*})$. Namely (A3) holds.\\
\indent
We show that $\Poly(\g^{*})$ is the free dual-prePoisson algebra over $\g$.
\begin{lemma}
Given a dual-prePoisson algebra $P$ and
given a Loday algebra homomorphism $\phi:\g\to P$,
there exists the unique dual-prePoisson algebra morphism
$\hat{\phi}:\Poly(\g^{*})\to P$, which is defined by
$$
\hat{\phi}:
(\bar{x}_{1}\bar{x}_{2}\cdot\cdot\cdot\bar{x}_{n})\ot y
\mapsto
\phi(x_{1})*\phi(x_{2})*\cdot\cdot\cdot*\phi(x_{n})*\phi(y).
$$
\end{lemma}
\begin{proof}
The mapping $\hat{\phi}$ is well-defined
because $P$ satisfies $(1*2)*3=(2*1)*3$ and
$\{1,2\}_{P}*3=-\{2,1\}_{P}*3$.
Here $\{,\}_{P}$ is the dual-prePoisson bracket on $P$.
It is obvious that $\hat{\phi}$ preserves the perm-product.
We show that $\hat{\phi}$ preserves the dual-prePoisson bracket.
The defining eq. (\ref{defact2}) is preserved:
\begin{eqnarray*}
\hat{\phi}\{\bar{x}_{1}\cdots\bar{x}_{n},y\}&=&
\hat{\phi}\sum_{i}
\bar{x}_{1}\cdots
\bar{x}_{i-1}\bar{x}_{i+1}\cdots
\bar{x}_{n}\ot[x_{i},y]\\
&=&\sum_{i}
\phi(x_{1})*\cdots*\phi(x_{i-1})*\phi(x)_{i+1}*\cdots*
\phi(x)_{n}*\{\phi(x_{i}),\phi(y)\}_{P}\\
&=&\{\phi(x_{1})*\phi(x_{2})*\cdot\cdot\cdot*\phi(x_{n}),\phi(y)\}_{P}.
\end{eqnarray*}
From
$\hat{\phi}(\{\bar{x},\bar{y}\}\ot z)=
\hat{\phi}(\overline{[x,y]}\ot z)=
\phi[x,y]*\phi(z)=
\{\phi(x),\phi(y)\}*\phi(x)$,
for any $f,g\in S(\g_{Lie})$ we have
$$
\hat{\phi}(\{f,g\}\ot z)=\{\hat{\phi}(f),\hat{\phi}(g)\}*\phi(z),
$$
which implies that (\ref{defact1}) is also preserved.
\end{proof}
Thus, we know that $\Poly(\g^{*})$ satisfies (A1)-(A4).
The proof of the theorem is completed.
\end{proof}
\noindent
\textbf{Proof of Proposition \ref{prop0}.}
Since the free Poisson algebra is unital, there exists an injection,
$$
V\to 1\ot\F_{\Lod}(V)\to\F_{\Poiss}(V)\ot\F_{\Lod}(V).
$$
The Liezation of the free Loday algebra
is the free Lie algebra, because the Liezation
is universal.
It is well-known that $\F_{\Poiss}=\F_{\Com}\F_{\Lie}$.
Hence,
$$
\F_{\Poiss}(V)=\F_{\Com}
\Big(\big(\F_{\Lod}V\big)_{Lie}\Big).
$$
Thus $\F_{\Poiss}(V)\ot\F_{\Lod}(V)$
becomes a dual-prePoisson algebra.
The universality is followed from the diagram,
$$
\begin{CD}
V@>>>\F_{\Lod}(V)@>>>\F_{\Poiss}(V)\ot\F_{\Lod}(V) \\
@VVV @VVV @VVV \\
A@= A@= A,
\end{CD}
$$
where $A$ is an arbitrary dual-prePoisson algebra.
The proof of Proposition \ref{prop0} is completed.
\medskip\\
\indent
We notice that
the projection $\Poly(\g^{*})\to\lbar{S}(\g_{Lie})$
coincides with the Poissonization of $\Poly(\g^{*})$,
that is,
$$
\Big(\Poly(\g^{*})\Big)_{Poiss}\cong\lbar{S}(\g_{Lie}).
$$
There exists a natural morphism $(I_{1},I_{0})$ in $\LM$,
$$
\begin{CD}
\g@>I_{1}>>\Poly(\g^{*})\\
@VVV @VVV \\
\g_{Lie}@>I_{0}>>\lbar{S}(\g_{Lie}).
\end{CD}
$$
It is natural to ask whether or not $(I_{1},I_{0})$ is
a universal arrow.
\begin{corollary}
The Poissonization $\Poly(\g^{*})\to\lbar{S}(\g_{Lie})$ is
free over the Liezation,
that is, this Poisson object is the ``Lie-Poisson object"
over the Lie object.
\end{corollary}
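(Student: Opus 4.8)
The plan is to read the corollary off from the three universal properties already established, by composing the corresponding adjunctions. Recall that ``free Poisson object over a Lie object'' means left adjoint to the forgetful functor from Poisson objects to Lie objects, the one that keeps the bracket morphism $\mu$ and discards the commutative multiplication $\nu$; under this functor a Poisson object $P_{1}\to P_{0}$ becomes a Lie object whose $P_{1}$-part, equipped with $[x,y]_{Lod}:=[\rho_{P}(x),y]_{\mu_{1}}$ (here $\rho_{P}\colon P_{1}\to P_{0}$ is the structure map), is a Loday algebra, and whose $P_{1}$-part also carries the dual-prePoisson structure $x*y:=\rho_{P}(x)\cdot y$, $\{x,y\}:=\{\rho_{P}(x),y\}$ coming from the Poisson-module structure (cf.\ (\ref{mdp1})--(\ref{mdp2})). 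Fixing an arbitrary Poisson object $P_{1}\to P_{0}$, what must be shown is
\[
\Hom_{Poissobj}\big(\Poly(\g^{*})\to\lbar{S}(\g_{Lie}),\,P_{1}\to P_{0}\big)
\;\cong\;
\Hom_{Lieobj}\big(\g\to\g_{Lie},\,P_{1}\to P_{0}\big),
\]
naturally and compatibly with precomposition along $(I_{1},I_{0})$.

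I would obtain this bijection as a composite of three. Since the projection $\Poly(\g^{*})\to\lbar{S}(\g_{Lie})$ is, as noted just above, the Poissonization of $\Poly(\g^{*})$, the universality of the Poissonization gives $\Hom_{Poissobj}(\Poly(\g^{*})\to\lbar{S}(\g_{Lie}),P_{1}\to P_{0})\cong\Hom_{Agu}(\Poly(\g^{*}),P_{1})$; by Theorem \ref{theorema}, $\Poly(\g^{*})$ is the free dual-prePoisson algebra over the Loday algebra $\g$, so $\Hom_{Agu}(\Poly(\g^{*}),P_{1})\cong\Hom_{Lod}(\g,P_{1})$; and by Proposition \ref{proplp}, $\Hom_{Lod}(\g,P_{1})\cong\Hom_{Lieobj}(\g\to\g_{Lie},P_{1}\to P_{0})$. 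Concatenating these yields the claimed bijection, and I would check it is precomposition with $(I_{1},I_{0})$ as follows: given a Loday morphism $\phi\colon\g\to P_{1}$, its dual-prePoisson extension $\hat\phi\colon\Poly(\g^{*})\to P_{1}$ (the Lemma in the proof of Theorem \ref{theorema}) is the top arrow; since $\hat\phi(\Poly(\g^{*})^{ann})\subseteq P_{1}^{ann}$ and $\rho_{P}(P_{1}^{ann})=0$ by the equivariance of the Poisson-object structure, the composite $\rho_{P}\circ\hat\phi$ factors through $\Poly(\g^{*})\to\lbar{S}(\g_{Lie})$, furnishing the bottom arrow $F_{0}\colon\lbar{S}(\g_{Lie})\to P_{0}$; commutativity of the resulting square is immediate, and uniqueness holds because $I_{1}(\g)=1\ot\g$ generates $\Poly(\g^{*})$.

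The work is thus entirely formal once Theorem \ref{theorema} and the two preceding propositions are in hand; the only point demanding attention — more bookkeeping than genuine obstacle — is the mutual compatibility of the three universal properties. Concretely, I would verify that the underlying Loday algebra of the Lie object attached to a Poisson object $P_{1}\to P_{0}$ coincides with the underlying Loday algebra of the dual-prePoisson algebra attached to it, so that the first two bijections in the chain are composable, and that the Poissonization functor invoked in the first step is literally the one appearing in the identification $\lbar{S}(\g_{Lie})\cong(\Poly(\g^{*}))_{Poiss}$. With these checks made, the three bijections splice together and the corollary follows.
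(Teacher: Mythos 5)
Your argument is correct and is essentially the paper's own: the corollary is stated there without a separate proof precisely because it follows by splicing the Poissonization universality for Poisson objects, Theorem \ref{theorema}, and Proposition \ref{proplp}, exactly the chain of bijections you write down. Your extra checks (that $\rho_{P}$ kills $P_{1}^{ann}$, that the two induced Loday structures on $P_{1}$ agree, and that the bijection is precomposition with $(I_{1},I_{0})$) are the bookkeeping the paper leaves implicit, so nothing further is needed.
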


\subsection{Smooth and graded cases}

We consider a smooth case.
Assume $\dim\g<\infty$ and replace (\ref{defact2}) to
$$
\{f,y\}=\sum_{i}\frac{\pa f}{\pa \bar{e}_{i}}\ot [e_{i},y],
$$
where $f\in C^{\infty}(\g_{Lie})$ and
$\{\bar{e}_{i}\}$ is a linear coordinate on $\g^{*}_{Lie}$
which is identified with a linear basis of $\g$.
Then the definition (\ref{defbra0}) can be extended
to $C^{\infty}(\g^{*}_{Lie})\ot\g$.
\begin{corollary}
[Smooth cases]
We put
$\A^{\infty}(\g^{*}):=C^{\infty}(\g^{*}_{Lie})\ot\g$.
Then $\A^{\infty}(\g^{*})$ is a dual-prePoisson algebra.
\end{corollary}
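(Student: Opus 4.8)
The plan is to reduce the smooth statement to the polynomial one proved in Theorem \ref{theorema}, by showing that every step in that proof survives the replacement of the polynomial algebra $S(\g_{Lie})$ by the function algebra $C^{\infty}(\g^{*}_{Lie})$. First I would note that $(C^{\infty}(\g^{*}_{Lie}),\cdot,\{,\})$ is itself a genuine Poisson algebra: the commutative product is pointwise multiplication, and the linear Poisson bracket (the classical linear Schouten--Nijenhuis/Kirillov--Kostant bracket) is determined on linear coordinates $\bar{e}_{i}$ by the Lie bracket of $\g_{Lie}$ and extended by the Leibniz rule, which is exactly what the displayed formula $\{f,y\}=\sum_{i}\frac{\pa f}{\pa\bar{e}_{i}}\ot[e_{i},y]$ specializes. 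So the ambient Poisson structure is unchanged in kind; only the underlying space has grown.

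Next I would check that the perm-algebra structure on $\A^{\infty}(\g^{*})=C^{\infty}(\g^{*}_{Lie})\ot\g$ is still well-defined by $(f\ot x)*(g\ot y):=f\bar{x}g\ot y$, and that the identity $(u*v)*w=(v*u)*w$ still holds; this is immediate since it only uses commutativity of $C^{\infty}(\g^{*}_{Lie})$, just as in the polynomial case. Then I would verify the biderivation axioms (\ref{dpp1}), (\ref{dpp2}) and the symmetry axiom (\ref{dpp3}) for the extended bracket $\{f\ot x,g\ot y\}:=\{f\bar{x},g\ot y\}$. Each of these was checked in the polynomial case by ``a direct computation'' using only: (i) that $\{,\}$ is a biderivation of the commutative product on $C^{\infty}(\g^{*}_{Lie})$ (true for the Poisson bracket), (ii) the defining formula (\ref{defact2})/(its smooth analogue) for the action on $\g$, and (iii) the Leibniz identity of $\g$. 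None of these ingredients uses finite support of the monomial expansion, so the verifications go through verbatim with $\frac{\pa}{\pa\bar{e}_{i}}$ in place of the formal partial derivative.

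The one genuinely new point is the Leibnizator lemma, the analogue of Lemma \ref{lemmalemma}: one must show $L(f,g,h\ot z)=0$ for all $f,g\in C^{\infty}(\g^{*}_{Lie})$. In the polynomial case this was proved by induction on polynomial degree, reducing to the base cases $L(\bar{x},\bar{y},z)=0$ (equivalent to the Leibniz identity of $\g$) via the derivation identities $L(f\bar{x},g,z)=fL(\bar{x},g,z)+\bar{x}L(f,g,z)$. In the smooth setting there is no degree filtration to induct on, so I would instead argue directly: by the biderivation property, $L(f,g,-)$ is a derivation in each of $f,g$ and a derivation-type operator in $h$, hence $L(f,g,h\ot z)=h\,L(f,g,z)$, and $L(-,-,z)$ is a bidifferential operator of order $\le 1$ in each slot whose symbol and value at linear functions are determined by $L(\bar{e}_{i},\bar{e}_{j},z)=[[e_{i},e_{j}],z]-[e_{i},[e_{j},z]]+[e_{j},[e_{i},z]]=0$. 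Concretely one writes, using the Leibniz rules in each argument,
\begin{equation*}
L(f,g,z)=\sum_{i,j}\frac{\pa f}{\pa\bar{e}_{i}}\frac{\pa g}{\pa\bar{e}_{j}}\,L(\bar{e}_{i},\bar{e}_{j},z)
\end{equation*}
so that the vanishing of every base term forces $L(f,g,z)=0$. This expansion is the expected main obstacle, since one must be careful that the first-order terms (those involving second derivatives of $f$ or $g$) also cancel; they do, precisely because $\{,\}$ on $C^{\infty}(\g^{*}_{Lie})$ satisfies Jacobi, so $L(f,g,z)$ has no second-order part in $f$ or $g$. Granting this, the extended bracket satisfies the Leibniz identity, $\A^{\infty}(\g^{*})$ satisfies (\ref{dpp1})--(\ref{dpp3}), and the corollary follows. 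Freeness (an analogue of (A4)) is not claimed in the smooth case and need not be addressed, since $C^{\infty}$ is not a free object; the statement asserts only that $\A^{\infty}(\g^{*})$ is a dual-prePoisson algebra.
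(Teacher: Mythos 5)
Your proposal is correct and follows essentially the same route as the paper: the paper's entire proof of this corollary is the remark that Lemma \ref{lemmalemma} can be shown ``by a direct computation without induction'' in the smooth setting, and your expansion $L(f,g,z)=\sum_{i,j}\frac{\pa f}{\pa\bar{e}_{i}}\frac{\pa g}{\pa\bar{e}_{j}}L(\bar{e}_{i},\bar{e}_{j},z)$ (whose second-derivative terms indeed cancel, by symmetry of mixed partials and skewness of the structure constants, the same mechanism that makes the Lie--Poisson bracket satisfy Jacobi) is precisely that direct computation, with the remaining axioms carrying over verbatim as you say.
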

\begin{proof}
Lemma \ref{lemmalemma} is shown
by a direct computation without induction.
\end{proof}
We consider the super (graded) cases.
Let $V$ be a usual vector space.
The parity change of $V$, which is denoted by $\Pi V$,
is a superspace whose structure sheaf is
the Grassmann algebra over the dual space $V^{*}$,
$$
C^{\infty}(\Pi V):=\bigwedge^{\cdot\ge 0}V^{*}.
$$
If $V=\g^{*}_{Lie}$ the dual space of a Lie algebra, then
$C^{\infty}(\Pi\g^{*}_{Lie})$ becomes a Poisson algebra of degree $(-1,0)$.
Its Poisson bracket is known as Schouten-Nijenhuis (SN) bracket.
In the cases of Loday algebras, the superspace is not
commutative algebra but perm-algebra.
We introduce the Loday algebra version of SN-bracket.
\begin{corollary}
[Noncommutative Schouten-Nijenhuis brackets]
Given a Loday algebra $\g$,
we define a nonstandard superspace $\Pi\g^{*}$ as
$$
\A^{\infty}(\Pi\g^{*}):=C^{\infty}(\Pi\g^{*}_{Lie})\ot(\uparrow\g).
$$
Here $\uparrow\g$ is the linear functions on $\g^{*}$ with the degree $+1$.
Then $\A^{\infty}(\Pi\g)$ becomes a super (graded)
dual-prePoisson algebra with the degree $(-1,0)$.
\end{corollary}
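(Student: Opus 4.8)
The plan is to obtain the graded statement by transporting the structure of the smooth Loday--Poisson algebra $\A^{\infty}(\g^{*})=C^{\infty}(\g^{*}_{Lie})\ot\g$ along the parity-change functor, exactly as the classical Schouten--Nijenhuis bracket on $C^{\infty}(\Pi\g^{*}_{Lie})=\bigwedge V^{*}$ is obtained from the Lie--Poisson algebra on $C^{\infty}(\g^{*}_{Lie})=S(V^{*})$. First I would recall that for a finite-dimensional Lie algebra $\h$ (here $\h=\g_{Lie}$), the graded space $C^{\infty}(\Pi\h^{*})=\bigwedge\h$ carries the linear SN bracket of degree $-1$, and in a linear basis $\{\bar e_i\}$ of $\h$ it is the unique biderivation extending $\{\bar e_i,\bar e_j\}^{SN}=\overline{[e_i,e_j]}$ and satisfying the graded Jacobi and graded skewsymmetry axioms. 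So $(C^{\infty}(\Pi\g^{*}_{Lie}),\{,\}^{SN})$ is a graded Poisson algebra of degree $(-1,0)$.

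Next I would mimic Section~4.1 in the graded setting: define on $\A^{\infty}(\Pi\g^{*}):=C^{\infty}(\Pi\g^{*}_{Lie})\ot(\uparrow\g)$ the perm-multiplication $(f\ot x)*(g\ot y):=f\bar x g\ot y$ (now with $\bar x$ of degree $+1$, so the natural Koszul signs appear and the graded perm-identity $x*y*z=(-1)^{|x||y|}y*x*z$ holds), and define the bracket by the graded analogue of \eqref{defact1}--\eqref{defbra0}, namely
\begin{equation*}
\{f\ot x,g\ot y\}:=\{f\bar x,g\ot y\},\qquad
\{f,g\ot y\}:=\{f,g\}^{SN}\ot y+(-1)^{|f|}g\,\{f,y\},
\end{equation*}
where $\{f,y\}:=\sum_i \dfrac{\pa f}{\pa\bar e_i}\ot[e_i,y]$ with the odd partial derivative. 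The degree bookkeeping is the point: $\uparrow\g$ sits in degree $+1$, the SN bracket lowers polynomial degree by $1$, and the displayed bracket on $\A^{\infty}(\Pi\g^{*})$ then has bidegree $(-1,0)$ as claimed.

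Then I would verify the graded dual-prePoisson axioms. The two biderivation axioms \eqref{dpp1}--\eqref{dpp2} (in their signed forms in the excerpt) follow from the fact that $*$ is ``multiplication by $f\bar x$ on the left factor, postcompose by $(-)\ot y$'' and that $\{,\}^{SN}$ is a graded biderivation on $C^{\infty}(\Pi\g^{*}_{Lie})$; axiom \eqref{dpp3}, $\{x,y\}*z=-(-1)^{(|x|-n)(|y|-n)}\{y,x\}*z$, is automatic because $\{f\ot x,g\ot y\}*z$ only depends on the product $(f\bar x)(g\bar y)\in C^{\infty}(\Pi\g^{*}_{Lie})$, which is graded-symmetric. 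The remaining content is the graded Leibniz identity for $\{,\}$, i.e. the graded analogue of Lemma~\ref{lemmalemma}: by the biderivation property $L(f,g,h\ot z)=(-1)^{\cdots}hL(f,g,z)$ reduces it to $L(f,g,z)=0$, which on generators $f=\bar x$, $g=\bar y$ is precisely the Leibniz identity of $\g$, and propagates by the graded biderivation formula $L(f\bar x,g,z)=(-1)^{\cdots}fL(\bar x,g,z)+(-1)^{\cdots}\bar x L(f,g,z)$. In the finite-dimensional smooth/graded setting this is a direct computation with the odd derivations $\pa/\pa\bar e_i$, exactly as in the proof of the Smooth-cases corollary, so no induction on polynomial degree is needed.

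I expect the main obstacle to be purely the sign bookkeeping: making the Koszul signs in the odd Leibniz rule for $\pa/\pa\bar e_i$, in the graded biderivation axioms, and in the graded Jacobi/Leibnizator identity all consistent with the stated bidegree $(-1,0)$, so that the three signed axioms displayed in Section~2.3 come out on the nose. A convenient way to sidestep most of it is to note that $\A^{\infty}(\Pi\g^{*})$ is the semidirect-product dual-prePoisson algebra $P\ltimes M$ of \eqref{mdp1}--\eqref{mdp2} with $P=C^{\infty}(\Pi\g^{*}_{Lie})$ the SN Poisson algebra and $M=C^{\infty}(\Pi\g^{*}_{Lie})\ot(\uparrow\g)$ the Poisson module whose bracket action encodes the $\g_{Lie}$-action \eqref{fact} shifted into degree $+1$; then the dual-prePoisson axioms are inherited from the general semidirect-product construction already recorded in Section~2.3, and one only has to check that $M$ is indeed a graded left Poisson module over $P$, which again boils down to the graded Leibniz identity of $\g$. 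Either route gives the corollary.
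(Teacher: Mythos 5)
Your main route is exactly the paper's (implicit) argument: the paper proves the polynomial case via the Leibnizator Lemma \ref{lemmalemma}, remarks that in the smooth case the same computation works directly with $\pa/\pa\bar e_i$ and without induction, and the graded corollary is then the same construction of (\ref{defact1})--(\ref{defbra0}) carried out in $C^{\infty}(\Pi\g^{*}_{Lie})\ot(\uparrow\g)$ with the Koszul signs of Section 2.3 and the SN bracket playing the role of the Lie--Poisson bracket; your degree bookkeeping for the bidegree $(-1,0)$ and the reduction of the graded Leibniz identity to the Leibniz identity of $\g$ via the biderivation property are precisely what is needed. One caution about your proposed shortcut: $\A^{\infty}(\Pi\g^{*})$ is \emph{not} the semidirect product $P\ltimes M$ of (\ref{mdp1})--(\ref{mdp2}), since in that construction the product of two elements of $M$ vanishes, whereas here $(f\ot x)*(g\ot y)=f\bar xg\ot y$ is generally nonzero; what you are really using is the pull-back of the $P$-module structure on $M$ along the Poissonization-type map $f\ot x\mapsto f\bar x$, i.e.\ the device of Section 4.1 itself, so the ``semidirect product'' phrasing should be dropped or corrected, and the first route kept as the actual proof.
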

\begin{definition}
We call the dual-prePoisson algebras
$\Poly(\g^{*})$, $\A^{\infty}(\g^{*})$ and $\A^{\infty}(\Pi\g^{*})$
the \textbf{Loday-Poisson algebras}.
\end{definition}
\begin{example}
Let us consider the case of $\g:=sl(2)$.
Since $sl(2)=sl(2)_{Lie}$,
$$
\A^{\infty}(\Pi sl^{*}(2))=\big(\wedge^{\cdot}sl(2)\big)\ot\uparrow sl(2).
$$
In the classical case, $r=X\wedge H$
is a (triangular-)$r$-matrix which satisfies
the integrability condition $\{r,r\}_{SN}=0$,
where $\{,\}_{SN}$ is a classical
Schouten-Nijenhuis bracket on $\bigwedge^{\cdot}\g$.
In the noncommutative case, we obtain
\begin{eqnarray*}
\{X\ot H,X\ot H\}_{NSN}&=&\{X\wedge H,X\ot H\}\\
&=&X\wedge\{H,X\ot H\}-H\wedge\{X,X\ot H\}\\
&=&2H\wedge X\ot X,
\end{eqnarray*}
where $[H,X]=2X$ is used.
\end{example}

\section{Quantization}

In this section we study an algebraic quantization
of Loday-Poisson algebra.
In first we recall the notion of associative dialgebra.
\begin{definition}[\cite{Lod2}]
Let $D$ be a vector space with
two associative multiplications $\vd$ and $\dv$.
When the following three axioms are satisfied,
$D$ is called an associative dialgebra,
or called an associative Loday algebra,
\begin{eqnarray*}
(a\vd b)\vd c&=&(a\dv b)\vd c,\\
(a\vd b)\dv c&=&a\vd(b\dv c),\\
a\dv(b\dv c)&=&a\dv(b\vd c),
\end{eqnarray*}
where $a,b,c\in D$.
\end{definition}
\indent
If $D$ is a dialgebra, then
the commutator
$$
[a,b]_{di}:=a\vd b-b\dv a
$$
is a Loday bracket.
Hence, dialgebras are
associative analogues of Loday algebras.\\
\indent
Given a Loday algebra $\g$,
the universal enveloping {\em dialgebra},
$Ud(\g)$, has the form of $Ud(\g):=U(\g_{Lie})\ot\g$.
The dialgebra multiplications on $Ud(\g)$
are defined by
\begin{eqnarray*}
&&f\ot x \vd g\ot y:=f\bar{x}g\ot y,\\
&&f\ot x \dv 1\ot y:=f\bar{y}\ot x-f\ot [y,x],\\
&&f\ot x \dv (\bar{y}_{1}\cdots\bar{y}_{n-1}\ot y_{n}):=
(\cdots(f\ot x \dv 1\ot y_{1})\dv\cdots)\dv(1\ot y_{n}),
\end{eqnarray*}
where $f,g,\bar{x}_{\cdot}\in U(\g_{Lie})$.
We have
$$
(1\ot x)\vd(1\ot y)-(1\ot y)\dv(1\ot x)=1\ot [x,y].
$$
Since $grU(\g_{Lie})\cong S(\g_{Lie})$ (PBW-theorem),
we obtain $grUd(\g)=\Poly(\g^{*})$.
\begin{remark}
[\cite{LP}]
It is known that
$Ud(\g)\to\lbar{U}(\g_{Lie})$ is
the universal enveloping gebra,
which is an associative gebra (or associative object) in $\LM$,
over the Lie object $\g\to\g_{Lie}$.
\label{remarklp}
\end{remark}
\indent
Let $S_{\star}(\g_{Lie}):=
\big(S(\g_{Lie})[[\hbar]],\star\big)$
be a canonical deformation quantization (Kontsevich \cite{Kont})
of the Lie-Poisson algebra, where
$\dim\g<\infty$ and
$\hbar$ is the formal parameter of deformation.
In \cite{Kont} Section 8.3.1, it was shown that
$S_{\star}(\g_{Lie})$ is isomorphic to
$U_{\hbar}(\g_{Lie})$ as an associative algebra,
where $U_{\hbar}(\g_{Lie})$ is the universal
enveloping algebra over $\g_{Lie}$ with the Lie bracket $\hbar[,]$.
We recall the proof of this theorem.
Assume $\hbar:=1$.
The star-product satisfies
$$
\bar{x}\star\bar{y}=\bar{x}\bar{y}
+\frac{1}{2}\{\bar{x},\bar{y}\},
$$
on the level of generators.
Since $\{\bar{x},\bar{y}\}=[\bar{x},\bar{y}]$,
we have
$\bar{x}\star\bar{y}-\bar{y}\star\bar{x}=[\bar{x},\bar{y}]$,
which gives an algebra homomorphism,
$I:U(\g_{Lie})\to S_{\star}(\g_{Lie})$.
This mapping is clearly surjective
and preserves the top terms of polynomials.
Hence it is bijective.
When $\hbar\neq 1$,
$I$ is the map of $U_{\hbar}(\g_{Lie})$ to $S_{\star}(\g_{Lie})$,
because the degree of monomial is coherent with the one of $\hbar$.\\
\indent
We extend $I$ to the following isomorphism,
$$
I\ot 1: Ud_{\hbar}(\g)\to\Poly_{\star}(\g^{*}),
$$
where $\Poly_{\star}(\g^{*}):=S_{\star}(\g_{Lie})\ot\g$
and where $Ud_{\hbar}(\g)$ is the universal enveloping dialgebra
over $\g$ with the Loday bracket $\hbar[,]$.
Via the isomorphism, one can define the unique
dialgebra structure on $\Poly_{\star}(\g^{*})$.
Let us denote by $\vd_{\star}$ and $\dv_{\star}$
the dialgebra multiplications on $\Poly_{\star}(\g^{*})$.
For example,
\begin{eqnarray*}
g\ot y\dv_{\star}\bar{x}_{1}\ot x_{2}&=&
(I\ot 1)\Big(I^{-1}(g)\ot y\dv \bar{x}_{1}\ot x_{2}\Big)\\
&=&
(I\ot 1)\Big((I^{-1}(g)\bar{x}_{1}\ot y-I^{-1}(g)\ot\hbar[x_{1},y])\dv 1\ot x_{2}\Big)\\
&=&
(I\ot 1)\Big(
I^{-1}(g)\bar{x}_{1}\bar{x}_{2}\ot y
-I^{-1}(g)\bar{x}_{1}\ot\hbar[x_{2},y]
-I^{-1}(g)\bar{x}_{2}\ot\hbar[x_{1},y]\\
&+&I^{-1}(g)\ot\hbar^{2}[x_{2},[x_{1},y]]
\Big)\\
&=&
g\star\bar{x}_{1}\star\bar{x}_{2}\ot y
-\hbar g\star\bar{x}_{1}\ot[x_{2},y]
-\hbar g\star\bar{x}_{2}\ot[x_{1},y]
+\hbar^{2}g\ot[x_{2},[x_{1},y]],
\end{eqnarray*}
where $I(\bar{x})=\bar{x}$ is used.
\begin{proposition}
For any $f\ot x,g\ot y\in\Poly_{\star}(\g^{*})$,
\begin{eqnarray*}
&&\lim_{h\to 0}(f\ot x)\vd_{\star}(g\ot y)=(f\ot x)*(g\ot y),\\
&&\lim_{h\to 0}\frac{1}{\hbar}[f\ot x,g\ot y]_{di}=
\{f\ot x,g\ot y\}.
\end{eqnarray*}
\end{proposition}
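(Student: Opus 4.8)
The plan is to prove the two limits separately: the first follows at once from the transport formula, and the second is derived from the freeness asserted in Theorem~\ref{theorema}. By $\mathbb{C}[[\hbar]]$-linearity it suffices to argue with $f,g\in S(\g_{Lie})$. For the first identity, recall that $I$ is an algebra homomorphism with $I(\bar x)=\bar x$, so the transport formula gives $(f\ot x)\vd_\star(g\ot y)=(I\ot 1)\big(I^{-1}(f)\,\bar x\,I^{-1}(g)\ot y\big)=(f\star\bar x\star g)\ot y$. Since $\star$ is a deformation quantization of the commutative algebra $S(\g_{Lie})$ one has $a\star b\equiv ab\bmod\hbar$, hence $\lim_{\hbar\to 0}(f\star\bar x\star g)=f\bar x g$ and so $\lim_{\hbar\to 0}(f\ot x)\vd_\star(g\ot y)=f\bar x g\ot y=(f\ot x)*(g\ot y)$. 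Using the identity $u\dv_\star v=v\vd_\star u-[v,u]_{di}$, the same computation gives $\lim_{\hbar\to 0}(u\dv_\star v)=v*u$ once we know $\lim_{\hbar\to 0}[v,u]_{di}=0$; and this holds because the $\hbar^{0}$-parts of $a\vd_\star b$ and $b\dv_\star a$ coincide (both being the commutative product of $S(\g_{Lie})$, as the displayed computation of $g\ot y\dv_\star\bar x_1\ot x_2$ shows, its $\hbar^{0}$-term being $g\bar x_1\bar x_2\ot y$).

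It follows that $[a,b]_{di}$ has vanishing $\hbar^{0}$-part, so $\{\!\{a,b\}\!\}:=\lim_{\hbar\to 0}\frac1\hbar[a,b]_{di}$ is a well-defined bracket on $\Poly(\g^{*})$. On the generators $1\ot\g$ a direct computation from the defining formulas gives $[1\ot x,1\ot y]_{di}=\hbar\,(1\ot[x,y])$, hence $\{\!\{1\ot x,1\ot y\}\!\}=1\ot[x,y]=\{1\ot x,1\ot y\}$, so that $1\ot\g\cong\g$ is a Loday subalgebra of $(\Poly(\g^{*}),\{\!\{-,-\}\!\})$.

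To finish, I would show that $(\Poly(\g^{*}),*,\{\!\{-,-\}\!\})$ is itself a dual-prePoisson algebra. The key observation is that in every associative dialgebra (both $\vd$ and $\dv$ being associative) the three dialgebra axioms imply the identities $[a,b\vd c]_{di}=[a,b]_{di}\vd c+b\vd[a,c]_{di}$, $\ [a\vd b,c]_{di}=a\vd[b,c]_{di}+[a,c]_{di}\dv b$, and $([a,b]_{di}+[b,a]_{di})\vd c=0$. Applying $\lim_{\hbar\to 0}\frac1\hbar(-)$ to these for $\vd_\star,\dv_\star$, and using the $\mathbb{C}[[\hbar]]$-bilinearity of $\vd_\star,\dv_\star$ together with $\lim_{\hbar\to0}(u\vd_\star v)=u*v$ and $\lim_{\hbar\to0}(u\dv_\star v)=v*u$, yields exactly the axioms (\ref{dpp1}), (\ref{dpp2}), (\ref{dpp3}) for $\{\!\{-,-\}\!\}$. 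Since $[-,-]_{di}$ is a Loday bracket so is $\{\!\{-,-\}\!\}$, and $(a*b)*c=(b*a)*c$ is the $\hbar\to 0$ limit of the axiom $(a\vd b)\vd c=(a\dv b)\vd c$; hence $(\Poly(\g^{*}),*,\{\!\{-,-\}\!\})$ is a dual-prePoisson algebra.

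Now by Theorem~\ref{theorema}, $(\Poly(\g^{*}),*,\{-,-\})$ is the \emph{free} dual-prePoisson algebra over $\g$, so the identity Loday homomorphism $\g\to 1\ot\g$ extends uniquely to a dual-prePoisson morphism $\hat\phi\colon(\Poly(\g^{*}),*,\{-,-\})\to(\Poly(\g^{*}),*,\{\!\{-,-\}\!\})$; by the explicit formula in the proof of Theorem~\ref{theorema}, $\hat\phi\big((\bar x_1\cdots\bar x_n)\ot y\big)=(1\ot x_1)*\cdots*(1\ot x_n)*(1\ot y)=(\bar x_1\cdots\bar x_n)\ot y$, so $\hat\phi=\mathrm{id}$. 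A dual-prePoisson morphism intertwines the brackets, so $\{-,-\}=\{\!\{-,-\}\!\}$, which is the second identity. The step I expect to be the main obstacle is the verification of the middle dialgebra identity $[a\vd b,c]_{di}=a\vd[b,c]_{di}+[a,c]_{di}\dv b$, whose derivation must combine associativity of $\dv$ with dialgebra axioms $2$ and $3$ in the correct order, together with the routine but delicate bookkeeping needed to ensure that the order-$\hbar$ coefficients pass to the limit — i.e.\ that $\lim_{\hbar\to 0}\frac1\hbar[u_\hbar\vd_\star v,w]_{di}$ really equals $\{\!\{(\lim_{\hbar\to0}u_\hbar)*v,w\}\!\}$, which rests on the $\mathbb{C}[[\hbar]]$-linearity and continuity of $\vd_\star$ and $\dv_\star$.
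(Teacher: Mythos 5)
Your argument is correct, but it follows a genuinely different route from the paper. The paper proves both limits by direct computation: it writes out $\bar{x}_{1}\star\cdots\star\bar{x}_{n-1}\ot x_{n}\vd_{\star}g\ot y$ and the iterated formula for $g\ot y\dv_{\star}(\bar{x}_{1}\star\cdots\star\bar{x}_{n-1}\ot x_{n})$ on monomials, forms the commutator, and identifies its order-$\hbar$ coefficient term by term with the defining formulas (\ref{defact1})--(\ref{defbra0}) of the Loday--Poisson bracket, using $\lim_{\hbar\to 0}\frac{1}{\hbar}[\,\cdot\,,\cdot\,]_{\star}=\{\cdot,\cdot\}$ on $S_{\star}(\g_{Lie})$. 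You handle the first limit the same way, but for the second you argue structurally: you check that in any associative dialgebra the commutator satisfies the three identities whose $\hbar\to 0$ limits are exactly (\ref{dpp1})--(\ref{dpp3}) (these identities do hold; I verified them from the dialgebra axioms as you indicate), so the semiclassical bracket $\{\!\{-,-\}\!\}$ makes $(\Poly(\g^{*}),*)$ a dual-prePoisson algebra restricting to $[\,,]$ on $1\ot\g$, and then the freeness of Theorem \ref{theorema} (via the explicit $\hat{\phi}$, which is the identity) forces $\{\!\{-,-\}\!\}=\{-,-\}$. What your route buys is that you never need to compute the commutator on general monomials; what it costs is reliance on the universal property and on the dialgebra-commutator identities, plus one point you pass over a little quickly: the claim that the $\hbar^{0}$-part of $u\dv_{\star}v$ is the opposite perm product (equivalently $[u,v]_{di}=O(\hbar)$) is only exhibited for a right factor of the form $\bar{x}_{1}\ot x_{2}$; for general $f\ot x$ it requires the iterated formula $f\ot x\dv(\bar{y}_{1}\cdots\bar{y}_{n-1}\ot y_{n})=(\cdots(f\ot x\dv 1\ot y_{1})\dv\cdots)\dv(1\ot y_{n})$ and a short induction --- which is essentially the same computation the paper carries out in its proof, so this is a gloss rather than a flaw. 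The remaining $\hbar$-bookkeeping you flag (continuity and $\mathbb{C}[[\hbar]]$-bilinearity of $\vd_{\star},\dv_{\star}$, and that $\frac{1}{\hbar}[a,\hbar w]_{di}\to 0$) is routine and goes through.
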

One concludes that
$\Poly_{\star}(\g^{*})$ is a deformation quantization of
the Loday-Poisson algebra $\Poly(\g^{*})$.
\begin{proof}
By the definition of the star dialgebra products,
\begin{equation}\label{qprove1}
\bar{x}_{1}\star\cdots\star\bar{x}_{n-1}\ot x_{n}\vd_{\star}g\ot y=
\bar{x}_{1}\star\cdots\star\bar{x}_{n}\star g\ot y
\end{equation}
and
\begin{multline*}
g\ot y\dv_{\star}(\bar{x}_{1}\star\cdots\star\bar{x}_{n-1}\ot x_{n})=\\
g\star\bar{x}_{1}\star\cdots\star\bar{x}_{n}\ot y-
\hbar\sum_{i\ge 1}
g\star\bar{x}_{1}\star\cdots\star\bar{x}^{\vee}_{i}\star\cdots\star\bar{x}_{n}
\ot[x_{i},y]+\cdots=\\
g\star\bar{x}_{1}\star\cdots\star\bar{x}_{n}\ot y-
\hbar\sum_{i\ge 1}
g\bar{x}_{1}\cdots\bar{x}^{\vee}_{i}\cdots\bar{x}_{n}\ot[x_{i},y]+\cdots.
\end{multline*}
Eq. (\ref{qprove1}) yields the first identity of the proposition
because $f$ is a polynomial with respect to the star product.
The commutator is
\begin{multline*}
\lim_{\hbar\to 0}
\frac{1}{\hbar}[\bar{x}_{1}\star\cdots\star\bar{x}_{n-1}\ot x_{n},g\ot y]_{di}=\\
\lim_{\hbar\to 0}\frac{1}{\hbar}
[\bar{x}_{1}\star\cdots\star\bar{x}_{n},g]\ot y+
\sum_{i\ge 1}
g\bar{x}_{1}\cdots\bar{x}^{\vee}_{i}\cdots\bar{x}_{n}\ot[x_{i},y]=\\
=\{\bar{x}_{1}\cdots\bar{x}_{n-1}\ot x_{n},g\ot y\},
\end{multline*}
where
$$
\lim_{\hbar\to 0}\frac{1}{\hbar}[\bar{x}_{1}\star\cdots\star\bar{x}_{n},g]
=\{\bar{x}_{1}\cdots\bar{x}_{n},g\}.
$$
The proof of the proposition is completed.
\end{proof}
\begin{remark}
[recall Remark \ref{remarklp}]
The associative object,
$\Poly_{\star}(\g^{*})\to\lbar{S}_{\star}(\g_{Lie})$,
can seen as a quantization of the Lie-Poisson object;
$$
\begin{CD}
\g@>Poiss_{1}>>\Poly(\g)@>{Quanti_{1}}>>\Poly_{\star}(\g^{*}) \\
@VVV @VVV @VVV \\
\g_{Lie}@>{Poiss_{0}}>>\lbar{S}(\g_{Lie})@>{Quanti_{0}}>>\lbar{S}_{\star}(\g_{Lie}).
\end{CD}
$$
Here $Poiss_{\cdot}$ is the Lie-Poisson functor
and $Quanti_{\cdot}$ is the quantization functor.
\end{remark}
\noindent
\textbf{Final Remark}. We studied six types of algebras,
i.e., three classical algebras; Lie, Poisson, Associative
and their noncommutative analogues; Loday, Loday-Poisson, di-associative.
We recall their operads
(see \cite{GK1,GK2}, \cite{Lod2}, \cite{Val} and \cite{MSS}
for operads).
\begin{definition}
[\cite{U}]
Let $\P$ be a binary quadratic operad
and let $\Perm$ be the operad of perm-algebras.
We call the functor
$$
\P\mapsto\Perm\ot\P
$$
a derived bracket construction, on the level of operad. 
\end{definition}
It is known that the operads of the six algebras are related
via the derived bracket constriction
\begin{eqnarray*}
\Lod &=& \Perm\ot\Lie,\\
\Lod\Poiss &=& \Perm\ot\Poiss,\\
\Dias &=& \Perm\ot\Ass,
\end{eqnarray*}
where $\Lie$, $\Poiss$ and $\Ass$ are respectively
operads of Lie algebras, Poisson algebras and associative algebras
and $\Lod$, $\Lod\Poiss$ and $\Dias$
are respectively operads of Loday algebras,
Loday-Poisson algebras (or dual-prePoisson algebras)
and associative dialgebras.
Thus, we obtain an operad theoretical description for
the results of the previous sections.
\begin{center}
\begin{tabular}{|l|l|l|l|} 
\hline
Type& Symmetry & Classical & Quantum \\ \hline
$\Perm\ot\P$& $\Lod$ & $\Lod\Poiss$ & $\Dias$ \\ \hline
$\P$ & $\Lie$ & $\Poiss$ & $\Ass$ \\
\hline
\end{tabular}
\end{center}

\begin{verbatim}
Kyousuke UCHINO
Shinjyuku Tokyo Japan
email:K_Uchino@oct.rikadai.jp
\end{verbatim}
\end{document}